\documentclass[a4paper,oneside,11pt]{article}
\author{Federico Venturelli}
\date{}
\title{Prehomogeneous tensor spaces}

\usepackage[latin1]{inputenc}
\usepackage[T1]{fontenc}
\usepackage[margin=2cm]{geometry}
\usepackage{amsmath,amssymb,amsthm,amsfonts}
\usepackage{indentfirst}
\usepackage{graphicx}
\graphicspath{}
\usepackage{verbatim}
\usepackage{mathrsfs}
\usepackage{tikz}
\usepackage[english]{babel}
\usepackage{url}
\usepackage{bbm}
\usepackage[toc,page]{appendix}

\usetikzlibrary{matrix}
\usetikzlibrary{arrows}
\usetikzlibrary{graphs}
\usetikzlibrary[graphs]
\geometry{a4paper}

\begin{document}
\theoremstyle{plain}
\newtheorem{thm}{Theorem}[section]
\newtheorem*{thm*}{Theorem}
\newtheorem{teor*}{Theorem}
\newtheorem{cor}[thm]{Corollary}
\newtheorem{lem}[thm]{Lemma}
\newtheorem{prop}[thm]{Proposition}
\theoremstyle{definition}
\newtheorem{defn}{Definition}[section]
\theoremstyle{remark}
\newtheorem{rem}{Remark}
\numberwithin{equation}{section}

\newcommand{\CC}{\mathbb{C}}
\newcommand{\cl}{cl}

\pagestyle{empty} 
\cleardoublepage 
\pagestyle{plain}
\maketitle
\let\thefootnote\relax\footnote{Date: November 13, 2017

This work was carried out while the author was a member of Universit\'a degli Studi di Firenze, 50134 Firenze, Italy}
\begin{abstract}
We refine the classification of prehomogeneous vector spaces provided by Sato and Kimura in the case of tensor spaces, presenting a quick way to determine whether a given tensor space is prehomogeneous or not.
\end{abstract}

\section*{Introduction}
The group $GL_{a_1}(\CC)\times\dots\times GL_{a_n}(\CC)$ acts naturally on $\CC^{a_1}\otimes\dots\otimes\CC^{a_n}$: our goal is to determine for which $(a_1,\ldots,a_n)$ the above tensor space is prehomogeneous. We will assume $n\geq 3$, otherwise the problem is trivial.

In 1977 Sato and Kimura obtained a classification of prehomogeneous vector spaces, also covering the case of tensor spaces, based on \textit{castling transforms}. Representing the space $\CC^{a_1}\otimes\dots\otimes\CC^{a_n}$ by the $n$-tuple $(a_1,\ldots,a_n)$, we say that $(b_1,\ldots,b_n)$ with $b_i\geq 0$ is a castling transform of $(a_1,\ldots,a_n)$ (or, equivalently, that $(b_1,\ldots,b_n)$ results from applying a castling transformation to $(a_1,\ldots,a_n)$) if there exists $\sigma\in S_n$ such that

\begin{equation*}
(b_1,\ldots,b_n)=(a_{\sigma(1)},\ldots,a_{\sigma(n-1)},\prod_{i=1}^{n-1}a_{\sigma(i)}-a_{\sigma(n)})
\end{equation*} 

We say that two $n$-tuples $(a_1,\ldots,a_n),(b_1,\ldots,b_n)$ with $a_i,b_i\geq 0$ are \textit{castling-equivalent}, and we write $(a_1,\ldots,a_n)\sim (b_1,\ldots,b_n)$, if one results from applying a finite number of castling transformations to the other. This is in fact an equivalence relation. We call a space $\CC^{a_1}\otimes\dots\otimes\CC^{a_n}$ (an $n$-tuple $(a_1,\ldots,a_n)$ with $a_i\geq 0$) \textit{minimal} if it has minimal dimension (minimal product $\prod_{i=1}^n a_i$) among those of its castling-equivalence class.

Now that we have introduced the necessary definitions and terminology (which will be motivated and made more precise in the next Section), we can state the classification obtained by Sato and Kimura in the case we focus on (see \cite[Section 2 Prop. 12, Section 6 Prop. 1, Section 5 Prop. 16]{SK}):

\begin{thm*}[Sato-Kimura]
\leavevmode
\begin{enumerate}
 \item Each castling-equivalence class contains a unique minimal space.
 \item A tensor space $(a_1,\ldots,a_n)$ is prehomogeneous if and only if it is castling-equivalent to either:
   \begin{enumerate}
	  \item A minimal space $(a'_1,\dots,a'_n)$ satisfying $\prod_{i=1}^{n-1} a'_{\sigma(i)}> a'_{\sigma(n)}\geq 3$ for some $\sigma\in S_n$.
		\item $(1,\ldots,1,2,2,2)$ or $(1,\ldots,1,2,3,3)$.
	 \end{enumerate}
\end{enumerate}
\end{thm*}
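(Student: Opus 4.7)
The plan is to follow the original Sato--Kimura approach, with the castling transform playing the central role. The key tool is the \emph{castling invariance of prehomogeneity}: I would first prove that $(a_1,\ldots,a_n)$ is prehomogeneous under $\prod_i GL_{a_i}(\CC)$ if and only if every castling transform of it is. The argument identifies $\CC^{a_1}\otimes\cdots\otimes\CC^{a_n}$ with $\mathrm{Hom}(\CC^{a_n},\CC^N)$ where $N=\prod_{i<n}a_i$, and exploits the $H$-equivariant isomorphism between the Grassmannians $\mathrm{Gr}(a_n,N)$ and $\mathrm{Gr}(N-a_n,N)$ (with $H=\prod_{i<n}GL_{a_i}$) to put the $H\times GL_{a_n}$-orbits on $\CC^N\otimes\CC^{a_n}$ in bijection with the $H\times GL_{N-a_n}$-orbits on $(\CC^N)^*\otimes\CC^{N-a_n}$, in particular matching dense orbits.

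For part (1), I would track how the dimension $\prod_i a_i$ evolves under a castling transform: replacing $a_{\sigma(n)}$ by $\prod_{i<n}a_{\sigma(i)}-a_{\sigma(n)}$ strictly decreases the dimension if and only if $\prod_{i<n}a_{\sigma(i)}<2a_{\sigma(n)}$, equivalently $a_{\sigma(n)}^2>\tfrac{1}{2}\prod_i a_i$. Since at most one index can satisfy this strict inequality outside a small list of degenerate cases (those where all but two entries are $1$ and the remaining two have ratio close to $1$), the dimension-reducing step is essentially canonical, and induction on $\prod_i a_i$ yields a unique minimal representative in each equivalence class.

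For part (2), I would use castling invariance to reduce to the case of a minimal tuple $(a'_1,\ldots,a'_n)$, characterized by $\prod_{i<n}a'_{\sigma(i)}\geq 2a'_{\sigma(n)}$ for every $\sigma\in S_n$. A necessary condition for prehomogeneity is the dimension inequality $\sum_i (a'_i)^2\geq\prod_i a'_i$; imposing this simultaneously with minimality leaves essentially only the tuples appearing in (a) and (b) as candidates. I would then verify that the tuples in (a) are prehomogeneous by exhibiting a dense orbit, using that when $a'_{\sigma(n)}\geq 3$ and $\prod_{i<n}a'_{\sigma(i)}>a'_{\sigma(n)}$ the expected generic stabilizer is small enough for a direct dimension count to close; the exceptional tuples in (b) are small and classical, and can be handled by explicit construction of a generic element.

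The main obstacle is the castling invariance lemma itself, which requires care in passing from the orbit correspondence to a statement about \emph{dense} orbits, rather than merely matching orbit sets. Once that is in hand, the rest reduces to dimension bookkeeping on integer tuples together with a bounded case analysis on small minimal tuples, both of which are essentially routine.
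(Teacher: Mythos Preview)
The paper does not prove this theorem; it is quoted from Sato--Kimura \cite{SK} and used as background. There is therefore no proof in the paper to compare your proposal against, so let me assess the proposal on its own terms.

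Your outline for castling invariance (via the $H$-equivariant duality $\mathrm{Gr}(a_n,N)\cong\mathrm{Gr}(N-a_n,N)$) is the standard one and is fine; it is essentially what underlies Proposition~\ref{prop:castling}.

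There is, however, a genuine gap in your plan for part~(2). You propose to verify that the tuples in (a) are prehomogeneous because ``the expected generic stabilizer is small enough for a direct dimension count to close.'' A dimension count is only a \emph{necessary} condition for prehomogeneity: knowing $\dim G - d_m \geq \dim V$ requires first computing $d_m$, and that computation \emph{is} the problem, not a formality. The triples $(2,k,k)$ with $k\geq 4$ are minimal and satisfy the naive dimension inequality $\sum a_i^2 > \prod a_i$, yet they are not prehomogeneous (Proposition~\ref{prop:Weierstrass}); what fails is precisely that the generic stabilizer is larger than the naive count predicts. Sato and Kimura establish prehomogeneity in the relevant cases by explicit invariant-theoretic arguments, and the present paper replaces those by Kac's theorem (Proposition~\ref{prop:pKac}) for $n=3$ together with Lemma~\ref{lem:CasoN} for $n\geq 4$. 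Your proposal does not supply any such mechanism.

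A second gap: the sentence ``imposing this simultaneously with minimality leaves essentially only the tuples appearing in (a) and (b) as candidates'' is not an argument but a restatement of the conclusion. The combinatorial work of bounding the minimal tuples with $N\geq 0$ is exactly the substantive content here. Finally, your uniqueness argument for part~(1) is incomplete: even if at each step only one castling move decreases the dimension, this does not by itself rule out two distinct descent paths terminating at different minima; the usual proof instead considers a shortest castling path between two putative minimal tuples and derives a contradiction from minimality at both endpoints.
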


For example, $(2,5,5)$ is minimal in its castling-equivalence class but it is not prehomogeneous, as it does not satisfy 2 (a) nor 2 (b) (we will examine the case $(2,k,k)$ in detail in the next Section). However, it is in general not easy to use Sato-Kimura's theorem since, a priori, one needs to apply the castling transformation several times: to give another example, $(3,35,92)$ becomes $(3,13,35)$ after a first castling transformation and $(3,4,13)$ after a second one; the latter space falls under case 2 (a) of Sato-Kimura's theorem, and as such is prehomogeneous. 

Our result allows to detect in a simpler way when $(a_1,\dots,a_n)$ is prehomogeneous: we define the value 

\begin{equation*}
N(a_1,\dots,a_n):=\sum_{i=1}^n(a_i^2-1)-(\prod_{i=1}^n a_i-1)
\end{equation*}

Notice that $N$ is invariant under castling transformation (see Proposition \ref{prop:N}) and that $N(2,k,k)=2$, $N(2,k,k+1)=3$. What we obtain is that:

\begin{teor*}\label{teor:INTRO}
Let $n\geq 3$ and $(a_1,\dots,a_n)\in\mathbb{N}^n$:
\begin{enumerate}
  \item If $N(a_1,\ldots,a_n)\leq -1$ then $(a_1,\ldots,a_n)$ is not prehomogeneous.
	\item If $N(a_1,\ldots,a_n)=0$ or $N(a_1,\ldots,a_n)=1$ then $(a_1,\ldots,a_n)$ is prehomogeneous.
  \item If $N(a_1,\ldots,a_n)=2$ then $(a_1,\ldots,a_n)$ is castling-equivalent to either a minimal space $(a_1',\ldots,a_n')$ with $n\geq 4$ whose smallest element is at least $2$, in which case $(a_1,\ldots,a_n)$ is prehomogeneous, or to a minimal space of type $(1,\ldots,1,2,k,k)$ for a unique $k\in\mathbb{N}$, in which case $(a_1,\ldots,a_n)$ is prehomogeneous if and only if $k\leq 3$.
	\item If $N(a_1,\ldots,a_n)\geq 3$ then $(a_1,\ldots,a_n)$ is prehomogeneous.
\end{enumerate}
\end{teor*}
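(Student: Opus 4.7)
The plan is to combine the castling-invariance of $N$ (Proposition \ref{prop:N}) with the uniqueness of the minimal representative in each castling-equivalence class (Sato-Kimura part 1) to reduce the problem to a classification of minimal tuples by $N$, and then to appeal to Sato-Kimura part 2.

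For case (1), a dimension count suffices. The kernel of the action of $G = GL_{a_1}(\CC) \times \cdots \times GL_{a_n}(\CC)$ on $V$ is the $(n-1)$-dimensional subtorus $\{(t_1 I_{a_1}, \ldots, t_n I_{a_n}) : \prod t_i = 1\}$, so $\dim(G/\ker) = \sum(a_i^2-1)+1$, and the existence of an open orbit forces $\prod a_i \leq \dim(G/\ker)$, i.e.\ $N \geq 0$.

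For cases (2)--(4) I pass to the unique minimal representative of the castling class (same $N$, same prehomogeneity status) and order its entries as $a_1 \leq \cdots \leq a_n$. Minimality amounts to saying that for every $j$, either $\prod_{i \neq j} a_i < a_j$ (castling infeasible) or $\prod_{i \neq j} a_i \geq 2 a_j$ (castling does not strictly reduce the product); the binding constraint is at $j = n$. A useful reduction is that $N$ depends only on the sub-tuple $(b_1, \ldots, b_m)$ of entries $\geq 2$, since the $1$'s contribute nothing to either $\sum(a_i^2-1)$ or $\prod a_i$. Because $\prod b_j$ grows exponentially in $m$ while $\sum(b_j^2-1)$ grows only linearly in $m$ (for bounded $b_j$), each fixed value of $N$ admits only finitely many such sub-tuples. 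Enumerating these via the identity $\sum b_j^2 - \prod b_j = N + m - 1$ then yields: $N = 0$ forces $m = 0$ (the all-$1$'s tuple, trivially prehomogeneous); $N = 1$ is vacuous (no solution exists with $b_j \geq 2$); $N = 2$ forces $m = 3$ with $(b_1, b_2, b_3) = (2, k, k)$, giving the family $(1, \ldots, 1, 2, k, k)$ for a unique $k$; and $N \geq 3$ produces minimal tuples that satisfy $\prod_{i<n} a_i > a_n \geq 3$ after a suitable permutation, hence fall under SK 2(a). The $k \leq 3$ vs.\ $k \geq 4$ dichotomy in case 3 then follows by direct inspection against the SK 2(b) exceptional list.

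The main obstacle is the combinatorial classification at $N = 2$: one must verify that the only integer solutions of $b_1^2 + b_2^2 + b_3^2 - b_1 b_2 b_3 = 4$ with $2 \leq b_1 \leq b_2 \leq b_3$ and $b_1 b_2 \geq 2 b_3$ are $(2, k, k)$, and that no minimal sub-tuple with $m \geq 4$ achieves $N = 2$ (a straightforward bound, since $\sum b_j^2 - \prod b_j$ quickly turns negative). Substituting $b_1 = 2$ into the Diophantine equation yields $(b_2 - b_3)^2 = 0$, forcing $b_2 = b_3 = k$; ruling out $b_1 \geq 3$ requires a short case check exploiting the minimality inequality. A similar but easier analysis for $N \geq 3$ eliminates the $a_n = 2$ possibility (there $m \leq 3$ twos gives $N \leq 2$), so that the minimality bound $\prod_{i<n} a_i \geq 2 a_n$ automatically yields the SK 2(a) hypothesis.
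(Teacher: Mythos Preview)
Your reduction to the minimal representative and the stripping of $1$'s is sound, and cases (1)--(2) go through. The genuine gap is in case (4). You claim that a minimal tuple with $N\geq 3$ satisfies Sato--Kimura 2(a), supported by two assertions: (i) ``$m\leq 3$ twos gives $N\leq 2$'', so $a_n\geq 3$; and (ii) the minimality bound $\prod_{i<n}a_i\geq 2a_n$ then yields $\prod_{i<n}a_i>a_n$. Both fail. For (i), the sub-tuple $(2,2)$ gives $N(1,\ldots,1,2,2)=3$, so $a_n=2$ is not excluded. For (ii), minimality only gives the \emph{disjunction} $\prod_{i<n}a_i\leq a_n$ or $\prod_{i<n}a_i\geq 2a_n$, and for minimal tuples with all entries $\geq 2$ it is the \emph{first} branch that occurs: $(2,2,4)$ is minimal with $N=6$, and $(2,2,2,8)$ is minimal with $N=9$; in both $a_n=\prod_{i<n}a_i$, and no permutation gives $\prod_{i\neq j}a_i>a_j\geq 3$. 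These tuples are prehomogeneous but match neither SK 2(a) as stated nor SK 2(b) (their $N$-values are not $2$), so your appeal to Sato--Kimura does not close.

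The paper does not use Sato--Kimura in its proof and takes a different route for $N\geq 3$. For $m=3$ it shows (Lemma~\ref{lem:Lemmetto} together with minimality) that a minimal triple $2\leq a\leq b\leq c$ with $N\geq 3$ satisfies $ab\leq c$, whence Kac's inequality $b^2+c^2-abc\geq b^2\geq 1$ holds and Proposition~\ref{prop:pKac} gives prehomogeneity. For $m\geq 4$ it proves directly (Lemma~\ref{lem:CasoN}) that a minimal tuple with all entries $\geq 2$ and $N\geq 0$ has $a_n\geq\prod_{i<n}a_i$, so Corollary~\ref{cor:CSuff} applies. Your ``straightforward bound'' excluding minimal $m\geq 4$ sub-tuples with $N=2$ also implicitly needs Lemma~\ref{lem:CasoN}: the $4$-tuple $(2,2,2,7)$ has $N=2$ and is only ruled out because it is not minimal (it castles to $(1,2,2,2)$). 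What is missing from your sketch, then, is exactly the pair of lemmas the paper supplies: Lemma~\ref{lem:Lemmetto} (feeding into Kac) for $m=3$, and Lemma~\ref{lem:CasoN} for $m\geq 4$.
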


The theorem shows that in order to determine whether $(a_1,\ldots,a_n)$ is prehomogeneous or not, we need to use castling transformations only if $N(a_1,\ldots,a_n)=2$. Referring back to the previous example, since $N(3,35,92)=36$ we can immediately conclude that $(3,35,92)$ is prehomogeneous.

Our paper is structured as follows:

\begin{enumerate}
 \item A brief introduction on castling transformation and group actions, including an important result by Kac (Theorem \ref{thm:Kac}).
 \item The proof of Theorem \ref{teor:INTRO} for $n=3$. Another proof of this case, carried out in a different way, already appeared in J. Weyman's notes \cite{Wey} (up to a few misprints).
 \item The proof of Theorem \ref{teor:INTRO} in the general case.
\end{enumerate}

For basic facts about prehomogeneous vector spaces and for the tensor product case we found the notes \cite{Man} particularly useful; a more extensive treatment can be found in \cite{Kim}. The suggestion of using Theorem \ref{thm:Kac} to investigate the prehomogeneity of tensor spaces came from G. Ottaviani.

\section{Castling transformation and group actions}

\begin{defn}
Let $G$ be an affine algebraic group and $V$ be a $G$-module; we say $V$ is \textit{prehomogeneous} if it contains a dense orbit for the action of $G$ (with respect to the Zariski topology).
\end{defn}

\begin{rem}\label{rem:UNO}
Let $G$ be an affine algebraic group acting on an irreducible algebraic variety $X$, and define $d_m$ as $d_m:=\min_{x\in X}\{dim(G_x)\}$, where $G_x$ is the isotropy group of $x$; $X$ is prehomogeneous for the action of $G$ if and only if the following condition holds:

\begin{equation}\label{eq:CondBase}  
dim(G)-d_m\geq dim(X)
\end{equation}

\end{rem}

This condition is clearly not very practical, as the computation of $d_m$ could be difficult (in the last paragraph of this Section we will provide an example of a relatively easy computation). Another sufficient condition for the prehomogeneity  of a vector space is provided by the next proposition and corollary; this condition is both significantly easier to check and weaker than (\ref{eq:CondBase}).

\begin{prop}
Let $V$ be a $G$-module of dimension $n$ and $m$ be an integer; if $m\geq n$ then $V\otimes\CC^m$ is prehomogeneous for the action of $G\times GL_m(\CC)$, defined on decomposable tensors by

\begin{equation*}
(G\times GL_m(\CC))\times (V\otimes\CC^m)\longrightarrow V\otimes\CC^m\hspace{2mm}|\hspace{2mm} ((g,A),v\otimes u)\rightarrow (g\cdot v)\otimes Au
\end{equation*}
\end{prop}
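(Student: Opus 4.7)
The plan is to identify $V\otimes\CC^m$, via a choice of basis of $V$, with the affine space $M_{n\times m}(\CC)$ of $n\times m$ matrices; under this identification a decomposable tensor $v\otimes u$ is sent to the outer product $vu^T$. Since the existence of a dense orbit for a subgroup immediately implies the same for the whole group, it suffices to prove that $\{e_G\}\times GL_m(\CC)$ already acts with a dense orbit; in particular, the $G$-module structure on $V$ plays no role and the statement reduces to pure linear algebra.

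Unwinding the action, the element $(e_G,A)$ sends the matrix $M=vu^T$ to $v(Au)^T=MA^T$, so the induced action of $GL_m(\CC)$ on $M_{n\times m}(\CC)$ is right multiplication by $A^T$; since the transpose is a bijection of $GL_m(\CC)$, its orbits coincide with those of plain right multiplication by $GL_m(\CC)$. Because $m\geq n$, the locus $U\subset M_{n\times m}(\CC)$ of matrices of maximal rank $n$ is Zariski open (it is the complement of the common vanishing set of the $n\times n$ minors) and nonempty (it contains $[I_n\mid 0]$), hence dense in the irreducible variety $V\otimes\CC^m$.

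The remaining point, which I would settle by a Gauss--Jordan style argument, is that $GL_m(\CC)$ acts transitively on $U$ by right multiplication: given $M\in U$, a column permutation reduces us to $M=[M_0\mid M_1]$ with $M_0\in GL_n(\CC)$, and then the matrix
\[
\begin{pmatrix} M_0^{-1} & -M_0^{-1}M_1 \\ 0 & I_{m-n}\end{pmatrix}\in GL_m(\CC)
\]
sends $M$ to $[I_n\mid 0]$ on the right. Combining the two steps, $U$ is a single $GL_m(\CC)$-orbit which is dense in $V\otimes\CC^m$, so $V\otimes\CC^m$ is prehomogeneous already under $\{e_G\}\times GL_m(\CC)$, and a fortiori under $G\times GL_m(\CC)$. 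There is no real obstacle in this argument: the essential content is the observation that, as soon as $m\geq n$, the factor $GL_m(\CC)$ alone is large enough to sweep out an open orbit.
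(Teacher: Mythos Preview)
Your proof is correct and follows essentially the same approach as the paper: identify $V\otimes\CC^m$ with a matrix space, observe that the $GL_m(\CC)$-factor alone acts by one-sided multiplication, and show that it reduces any full-rank matrix to a fixed standard form, so that the dense open locus of full-rank matrices is a single orbit. The only differences are cosmetic (the paper uses $M_{m\times n}$ with left multiplication rather than $M_{n\times m}$ with right multiplication, and is less explicit about the reducing matrix).
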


\begin{proof}
Let $\rho:G\rightarrow GL(V)$ be the linear map describing the action of $G$ on $V$; since $V\otimes\CC^m\simeq Hom(V^*,\CC^m)\simeq M_{m\times n}(\CC)$, if $B\in\rho(G)$, $A\in GL_m(\CC)$ and $M\in V\otimes\CC^m$ the action of $G\times GL_m(\CC)$ can be rewritten as follows:

\begin{equation*}
(\rho(G)\times GL_m(\CC))\times Hom(V^*,\CC^m)\longrightarrow Hom(V^*,\CC^m)\hspace{2mm}|\hspace{2mm} ((B,A),M)\rightarrow AMB^t
\end{equation*}

Since $m\geq n$, if $M$ has maximum rank it can be reduced to the form

\begin{center}
\[
\left[
\begin{array}{cccc}
1 & & &\\
& \ddots & &\\
& & \ddots &\\
& & & 1\\
0 & \dots & \dots & 0\\
\vdots & & & \vdots\\
0 & \dots & \dots & 0
\end{array} 
\right] 
\]
\end{center}

simply by multiplying it on the left by a suitable matrix $A$; this means that the matrices of maximum rank, which are a dense open subset of $M_{m\times n}(\CC)$, form an orbit of the action (the $G$-action is not necessary). 
\end{proof}

\begin{cor}\label{cor:CSuff}
Let $a_1,\dots,a_n\in\mathbb{N}$; if $a_j\geq\prod_{i\neq j} a_i$ for some $j\in\{1,\dots,n\}$, then $\CC^{a_1}\otimes\dots\otimes\CC^{a_n}$ is prehomogeneous for the action of $GL_{a_1}(\CC)\times\dots\times GL_{a_n}(\CC)$.
\end{cor}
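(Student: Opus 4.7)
The plan is to deduce the corollary as a direct application of the preceding proposition, by treating $\CC^{a_j}$ as playing the role of $\CC^m$ and bundling the remaining factors into a single $G$-module $V$.

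Specifically, I will set $V := \bigotimes_{i \neq j}\CC^{a_i}$, endowed with the natural action of $G := \prod_{i \neq j} GL_{a_i}(\CC)$ by the tensor product of the standard representations. Then $\dim V = \prod_{i \neq j} a_i$, and the hypothesis $a_j \geq \prod_{i \neq j} a_i$ exactly says that $m := a_j$ satisfies $m \geq \dim V$. Applying the previous proposition, $V \otimes \CC^m$ is prehomogeneous for $G \times GL_m(\CC)$.

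To finish, I will just observe that the identifications
\begin{equation*}
V \otimes \CC^m \;=\; \Bigl(\bigotimes_{i\neq j}\CC^{a_i}\Bigr)\otimes\CC^{a_j} \;\simeq\; \CC^{a_1}\otimes\dots\otimes\CC^{a_n}
\end{equation*}
and
\begin{equation*}
G \times GL_m(\CC) \;=\; \Bigl(\prod_{i\neq j} GL_{a_i}(\CC)\Bigr)\times GL_{a_j}(\CC) \;\simeq\; GL_{a_1}(\CC)\times\dots\times GL_{a_n}(\CC)
\end{equation*}
are equivariant, so the dense $G \times GL_m(\CC)$-orbit provided by the proposition is also a dense orbit for the $GL_{a_1}(\CC)\times\cdots\times GL_{a_n}(\CC)$-action, which is the desired conclusion.

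There is no real obstacle here: the statement is essentially a rephrasing of the proposition once one recognizes that the tensor product is associative and that the hypothesis on the $a_i$ is precisely the dimension inequality required. The only point worth mentioning is that $j$ can be any index, so one is free to regroup the factors in whichever way makes the single distinguished factor dominate the product of the others.
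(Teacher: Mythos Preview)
Your proof is correct and follows exactly the same approach as the paper: set $V:=\bigotimes_{i\neq j}\CC^{a_i}$, $G:=\prod_{i\neq j}GL_{a_i}(\CC)$, $m:=a_j$, and apply the preceding proposition. The paper states this in a single line without spelling out the equivariance, but your added remarks are harmless and entirely in the same spirit.
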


\begin{proof}
This follows immediately from the previous proposition ($V:=\bigotimes_{i\neq j}\CC^{a_i}$, $G:=\prod_{i\neq j}GL_{a_i}(\CC)$, $m:=a_j$).
\end{proof}

Starting from a prehomogeneous $G$-module $V$, castling transformations allow us to find possibly infinitely many other prehomogeneous vector spaces; they work as described in the following proposition, whose proof can be found in \cite[Prop. 28]{Man}.

\begin{prop}\label{prop:castling}
Let $V$ be a $G$-module of dimension $n$ and $p,q$ be non-negative integers such that $p+q=n$; then $V\otimes\CC^p$ is prehomogeneous for the action of $G\times GL_p(\CC)$ if and only if $V^*\otimes\CC^q$ is prehomogeneous for the action of $G\times GL_q(\CC)$.
\end{prop}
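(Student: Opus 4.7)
The plan is to reduce both prehomogeneity statements to a single condition on a Grassmannian, exploiting the isomorphism $Gr(p,V)\cong Gr(q,V^*)$ that sends a subspace to its annihilator.

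First I would identify $V\otimes\mathbb{C}^p$ with $\mathrm{Hom}(\mathbb{C}^{p*},V)$, on which $G\times GL_p(\mathbb{C})$ acts through $\rho(g)$ on the left and the natural $GL_p$ action on the right. Since $p+q=n$ forces $p\leq n$, the locus $U_p\subset V\otimes\mathbb{C}^p$ of homomorphisms of maximal rank $p$ is a non-empty, $G\times GL_p$-invariant Zariski open subset. The map $\pi_p\colon U_p\to Gr(p,V)$, $M\mapsto\mathrm{Im}(M)$, is a principal $GL_p$-bundle (the fiber over $W$ is the $GL_p$-torsor of linear isomorphisms $\mathbb{C}^{p*}\to W$) and is $G$-equivariant for the natural $G$-action on $Gr(p,V)$ induced by $\rho$. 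Because $\pi_p$ is a geometric quotient by the normal subgroup $GL_p$, its fibers are exactly the $GL_p$-orbits, so $G\times GL_p$ has a dense orbit on $V\otimes\mathbb{C}^p$ if and only if $G$ has a dense orbit on $Gr(p,V)$.

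Next, I would invoke the $G$-equivariant isomorphism $\Phi\colon Gr(p,V)\xrightarrow{\sim}Gr(q,V^*)$ given by $W\mapsto W^\perp$, where $G$ acts on $V^*$ via the contragredient representation of $\rho$. This is immediately $G$-equivariant because $g\cdot(W^\perp)=(g\cdot W)^\perp$ for the contragredient action. Hence $G$ has a dense orbit on $Gr(p,V)$ if and only if it has one on $Gr(q,V^*)$. Applying the first step symmetrically to the pair $(V^*,q)$ yields that $V^*\otimes\mathbb{C}^q$ is prehomogeneous for $G\times GL_q$ if and only if $G$ has a dense orbit on $Gr(q,V^*)$. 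Chaining the three equivalences gives the proposition.

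The main point requiring care is the claim that dense $G\times GL_p$-orbits on $U_p$ correspond bijectively to dense $G$-orbits on $Gr(p,V)$ under $\pi_p$: this is the standard fact that for a principal $H$-bundle $U\to X$ equivariant with respect to a larger group $K\supset H$, the $K$-orbit of $u\in U$ maps surjectively onto the $(K/H)$-orbit of $\pi(u)$, and orbit closures correspond under $\pi$ since $\pi$ is open and surjective. The edge cases $p=0$ and $q=0$ can be handled separately and trivially: $V\otimes\mathbb{C}^0=0$ is prehomogeneous by definition, while $V^*\otimes\mathbb{C}^n$ is prehomogeneous for $G\times GL_n$ by the preceding Proposition applied with $m=n$ (the argument there used only the $GL_m$-factor), so the equivalence holds vacuously.
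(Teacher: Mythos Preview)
Your argument is correct and is in fact the standard proof of the castling equivalence: reduce the question to a dense $G$-orbit on $Gr(p,V)$ via the principal $GL_p$-bundle $U_p\to Gr(p,V)$, then use the $G$-equivariant duality $Gr(p,V)\cong Gr(q,V^*)$. The paper does not supply its own proof of this proposition; it simply refers the reader to \cite[Prop.~28]{Man}. The Grassmannian argument you give is precisely the one found there (and, in its original form, in Sato--Kimura), so your approach coincides with the intended one rather than differing from it.

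One small remark: you state that ``$\pi_p$ is a geometric quotient by the normal subgroup $GL_p$'' and then pass to orbit correspondence. It is worth making explicit that the bijection between $G\times GL_p$-orbits on $U_p$ and $G$-orbits on $Gr(p,V)$ preserves \emph{density} because $\pi_p$ is open and surjective (as you note later); this is the only point where one could imagine a gap, and you have already addressed it. The treatment of the degenerate cases $p=0$ and $q=0$ is also fine.
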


\begin{rem}\label{rem:referee}
Clearly $V^*\otimes\CC^q\simeq V\otimes\CC^q$, but in general it is \textit{not true} that the latter space is prehomogeneous for the action of $G\times GL_q(\CC)$ if and only if the former is, as that isomorphism is not equivariant; what is true, however, is that a space $V\otimes W$ is prehomogeneous for the action of a \textit{reductive} group $G\times H$ acting respecting the tensor structure if and only if the space $V^*\otimes W$ is prehomogeneous (for a proof of this statement, see \cite[Prop. 2.21, Prop. 7.40]{Kim}). 
\end{rem}

Let us give an example of how castling transformations work in the situation we consider in this article, i.e. when the group $G:=GL_{a_1}(\CC)\times\dots\times GL_{a_n}(\CC)$ acts on the vector space $V:=\CC^{a_1}\otimes\dots\otimes\CC^{a_n}$ (this will also justify the definition of castling transformation we gave in the introduction). Assume the $G$-module $V$ is prehomogeneous, and pick $n-1$ numbers $a_i$ (say the first $n-1$ ones, for simplicity); if we call $p:=a_n$ and $q:=\prod_{i=1}^{n-1} a_i-a_n$, as long as $q\geq 0$ Proposition \ref{prop:castling} grants that $(\CC^{a_1}\otimes\dots\otimes\CC^{a_{n-1}})^*\otimes\CC^q$ is prehomogeneous for the action of $GL_{a_1}(\CC)\times\dots\times GL_{a_{n-1}}(\CC)\times GL_q(\CC)$. We now make use of Remark \ref{rem:referee} and conclude that $\CC^{a_1}\otimes\dots\otimes\CC^{a_{n-1}}\otimes\CC^q$ is prehomogeneous for the action of $GL_{a_1}(\CC)\times\dots\times GL_{a_{n-1}}(\CC)\times GL_q(\CC)$.

\begin{rem}
From now on we will often represent the space $\CC^{a_1}\otimes\dots\otimes\CC^{a_n}$ by the $n$-tuple $(a_1,\ldots,a_n)$, and we will speak indifferently of prehomogeneous $n$-tuples $(a_1,\ldots,a_n)$ or prehomogeneous vector spaces $\CC^{a_1}\otimes\dots\otimes\CC^{a_n}$.
\end{rem}

Thus, starting from a prehomogeneous $n$-tuple $(a_1,\ldots,a_n)$ we can obtain a `tree' of prehomogeneous $n$-tuples related to $(a_1,\ldots,a_n)$ using castling transformations and Remark \ref{rem:referee} (we say these $n$-tuples are castling-equivalent); as an example, assume that $n=3$ and that $(a,b,c)$ is prehomogeneous: the corresponding 'castling tree' is

{\small
\begin{center}
\begin{tikzpicture}[thick,scale=0.8,->,shorten >=2pt,every node/.style={draw=none,solid,fill=white,inner sep=0pt,minimum width=4pt}]
\draw (0,0) node {$(a,b,c)$} -- (-5,-2) node {$(bc-a,b,c)$};
\draw (0,0) node {$(a,b,c)$} -- (0,-6) node {$(a,ac-b,c)$};
\draw (0,0) node {$(a,b,c)$} -- (5,-2) node {$(a,b,ab-c)$};
\draw (-5,-2) node {$(bc-a,b,c)$} -- (-8,-4) node {$(b,bc-a,(bc-a)b-c)$};
\draw (-5,-2) node {$(bc-a,b,c)$} -- (-3,-4) node {$(c,bc-a,(bc-a)c-b)$};
\draw (5,-2) node {$(a,b,ab-c)$} -- (3,-4) node {(...)};
\draw (5,-2) node {$(a,b,ab-c)$} -- (8,-4) node {(...)};
\draw (0,-6) node {$(a,ac-b,c)$} -- (-3,-8) node {$(a,ac-b,(ac-b)a-c)$};
\draw (0,-6) node {$(a,ac-b,c)$} -- (3,-8) node {$(c,ac-b,(ac-b)c-a)$};
\end{tikzpicture}
\end{center}
}

of course provided all the elements of a given triplet are non-negative (we could also disregard trivial prehomogeneous $n$-tuples, i.e. those in which at least one element is $0$).

The fact that castling-equivalent vector spaces can be arranged in trees, and not just in graphs with possibly more than one minimal element, follows from this proposition (again, see \cite[Prop. 29]{Man} for a proof):

\begin{prop}
Every castling-equivalence class contains a unique element of minimal dimension, up to duality.
\end{prop}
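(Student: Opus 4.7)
My plan is to translate ``minimal'' into a local inequality on the entries, prove that at any non-minimal non-degenerate tuple the strictly decreasing castling is essentially unique, and then conclude uniqueness of the minimal element via a shortest-path argument between two hypothetical minimal representatives.

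Set $P=\prod_{i}a_i$ and $P_j=\prod_{i\neq j}a_i$. The castling at position $j$ replaces $a_j$ by $P_j-a_j$ (admissible iff $P_j\geq a_j$) and sends the total product $P=P_j a_j$ to $P_j(P_j-a_j)=P_j^2-P$. Thus this castling strictly decreases the product iff $P_j^2<2P$, equivalently $a_j>\sqrt{P/2}$; in particular a non-degenerate tuple will turn out to be minimal precisely when $a_j\leq\sqrt{P/2}$ for every $j$ whose castling is admissible.

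The crux of the proof is the following: if two distinct positions $j,k$ both gave strict decreases, then $a_j,a_k>\sqrt{P/2}$ would force $a_j a_k>P/2$, hence $\prod_{i\neq j,k}a_i<2$ and so $=1$; admissibility $P_j=a_k\geq a_j$ and $P_k=a_j\geq a_k$ would then force $a_j=a_k$, and the new entries $P_j-a_j=P_k-a_k=0$ would coincide---so the two castlings produce the same tuple up to permutation, namely a degenerate one. In particular, among non-degenerate, non-minimal tuples the strictly decreasing castling is unique.

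Given this, suppose $T$ and $T'$ are both minimal in a common castling-equivalence class, and let $T=T_0,\ldots,T_k=T'$ be a shortest connecting sequence. Minimality of $T_0$ and of $T_k$ forces $P(T_1)\geq P(T_0)$ and $P(T_{k-1})\geq P(T_k)$, so the sequence of products rises, peaks at some index $i^{\ast}$, then falls. An equality $P(T_{i^{\ast}\pm 1})=P(T_{i^{\ast}})$ corresponds to the castled entry equalling the one it replaces ($a_j=P_j/2$), so the tuple is unchanged as a multiset and the path can be shortened; if both adjacent steps at the peak are strict decreases, the uniqueness above forces $T_{i^{\ast}-1}=T_{i^{\ast}+1}$ and again the path shortens---both contradicting the minimality of $k$. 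Hence $k=0$ and $T=T'$.

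The most delicate point I foresee is the careful handling of the degenerate cases (tuples with a zero entry, or paths passing through them) and of the ``up to duality'' clause in the statement; I expect the latter reflects the implicit identification of $V\otimes\CC^p$ with $V^{\ast}\otimes\CC^q$ intrinsic to the castling correspondence rather than a genuine non-uniqueness of the underlying tuple.
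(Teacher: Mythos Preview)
The paper does not supply its own proof of this proposition: it simply refers the reader to \cite[Prop.~29]{Man}. So there is no in-paper argument to compare against, and your proposal stands or falls on its own merits.

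Your approach is sound and, in fact, rather elegant. The crucial computation is correct: the castling at position $j$ strictly lowers the product iff $a_j>\sqrt{P/2}$, and if two distinct positions $j,k$ both satisfy this together with admissibility, then $a_ja_k>P/2$ forces $\prod_{i\neq j,k}a_i=1$, after which admissibility in both directions gives $a_j=a_k$ and both castlings land on the \emph{same} degenerate multiset $(1,\dots,1,0,a_j)$. So from any non-degenerate tuple the strictly decreasing castling has a unique target, and your peak-shortening argument goes through: at an interior maximum $T_{i^\ast}$ of a shortest path, either one neighbouring step is product-preserving (hence the tuple is unchanged and the path shortens by one) or both neighbouring castlings strictly decrease and therefore coincide as targets (so $T_{i^\ast-1}=T_{i^\ast+1}$ and the path shortens by two). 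This yields uniqueness.

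Two small points worth tightening. First, the sentence ``the sequence of products rises, peaks at some index $i^\ast$, then falls'' overstates what you need and what is true; the sequence can oscillate, but it suffices to pick $i^\ast$ as the \emph{least} index realising the global maximum (the endpoint inequalities $P(T_1)\geq P(T_0)$ and $P(T_{k-1})\geq P(T_k)$ guarantee $1\leq i^\ast\leq k-1$). Second, the degenerate tuples are harmless once you observe that a tuple with two or more zero entries admits no nontrivial castling at all (so its class is a singleton), while a tuple with exactly one zero entry has a \emph{unique} admissible castling (at the zero slot); hence any path passing through such a tuple must enter and leave by the same edge and can be shortened. With these remarks your argument covers all cases.

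Your reading of ``up to duality'' is also correct: in Manivel's general statement the castling relates $V\otimes\CC^p$ to $V^{\ast}\otimes\CC^q$, and for non-reductive or non-self-dual $G$-modules $V$ this creates a genuine ambiguity at the minimal representative. In the tensor case $V=\CC^{a_1}\otimes\cdots\otimes\CC^{a_{n-1}}$ under $\prod GL_{a_i}$, the module is self-dual, so the clause is vacuous and the minimal \emph{tuple} is unique on the nose.
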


\begin{rem}
For the natural action of $GL_{a_1}(\CC)\times\dots\times GL_{a_n}(\CC)$ on $\CC^{a_1}\otimes\dots\otimes\CC^{a_n}$ every isotropy group contains $H:=\{(\lambda_1 I_{a_1},\dots,\lambda_n I_{a_n})|\lambda_1\cdot\ldots\cdot\lambda_n=1\}\subset G$, which has dimension $n-1$; hence, a necessary condition $\CC^{a_1}\otimes\dots\otimes\CC^{a_n}$ needs to satisfy in order to be prehomogeneous is (recall (\ref{eq:CondBase}))

\begin{equation}\label{eq:CondNec}
\sum_{i=1}^n a_i^2-\prod_{i=1}^n a_i -(n-1)\geq 0
\end{equation}
  
\end{rem}

The previous remark suggests that we investigate the prehomogeneity of $(a_1,\ldots,a_n)$ looking at the value

\begin{equation}\label{eq:DefN}
N(a_1,\dots,a_n):=\sum_{i=1}^n a_i^2-\prod_{i=1}^n a_i -n+1
\end{equation}

This approach is also consistent with the castling transformation, since

\begin{prop}\label{prop:N}
If $(a_1,\ldots,a_n)$ and $(b_1,\ldots,b_n)$ are castling-equivalent, then $N(a_1,\ldots,a_n)=N(b_1,\ldots,b_n)$.
\end{prop}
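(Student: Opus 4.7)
The plan is to reduce the statement to a single line of algebra. First I would observe that $N$ is manifestly symmetric in its arguments: both $\sum_{i=1}^{n} a_i^2$ and $\prod_{i=1}^{n} a_i$ are invariant under any permutation of $(a_1,\dots,a_n)$, so the permutation $\sigma \in S_n$ appearing in the definition of a castling transformation plays no role. Combined with the fact that castling-equivalence is generated, by a standard induction on the number of steps, from a single basic move, it is enough to prove
\begin{equation*}
N(a_1,\dots,a_{n-1},a_n) = N(a_1,\dots,a_{n-1},P-a_n),
\end{equation*}
where $P:=\prod_{i=1}^{n-1} a_i$ and we assume $P-a_n\geq 0$.

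For this single step I would note that the terms $a_1^2,\dots,a_{n-1}^2$ and the constant $-n+1$ in the definition of $N$ are unaffected, so the only contributions to the difference come from the last square and from the total product. Concretely, one has
\begin{equation*}
N(a_1,\dots,a_{n-1},a_n) - N(a_1,\dots,a_{n-1},P-a_n) = \bigl[a_n^2 - Pa_n\bigr] - \bigl[(P-a_n)^2 - P(P-a_n)\bigr].
\end{equation*}
Expanding $(P-a_n)^2 - P(P-a_n) = P^2 - 2Pa_n + a_n^2 - P^2 + Pa_n = a_n^2 - Pa_n$, the two brackets agree and the difference vanishes.

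There is no real obstacle: castling invariance of $N$ is a pure algebraic identity, reflecting the fact that the change $a_n \mapsto P - a_n$ alters $\sum a_i^2$ and $\prod a_i$ by exactly the same amount. The only care needed is the reduction to the single basic step, which relies on the symmetry of $N$ and on the fact that castling-equivalence is by definition generated by finitely many such moves.
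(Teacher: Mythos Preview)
Your proof is correct and follows essentially the same approach as the paper: reduce to a single castling step (the paper does this ``without loss of generality,'' you spell out the symmetry of $N$ and the induction) and then verify the algebraic identity directly. The only cosmetic difference is that the paper expands $N(b_1,\dots,b_n)$ and simplifies, whereas you compute the difference; both amount to the same one-line computation.
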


\begin{proof}
It is enough to prove the statement for $n$-tuples related by a single castling transformation, so we can assume (without loss of generality) $a_i=b_i$ for $i=1,\dots,n-1$ and $b_n=\prod_{i=1}^{n-1} a_i-a_n$; we obtain

\begin{equation*}
\begin{split}
N(b_1,\ldots,b_n) & =\sum_{i=1}^{n-1} a_i^2 +a_n^2+(a_1\cdot\ldots\cdot a_{n-1})^2 -2a_1\cdot\ldots\cdot a_n -\\
& -(\prod_{i=1}^{n-1} a_i)(a_1\cdot\ldots\cdot a_{n-1}-a_n)-n+1=\sum_{i=1}^n a_i^2-\prod_{i=1}^n a_i -n+1=\\
& = N(a_1,\ldots,a_n)
\end{split}
\end{equation*}
\end{proof}

Notice that the converse of Proposition \ref{prop:N} does not hold: $(1,1,3)$ and $(2,2,4)$ are not castling-equivalent, but $N(1,1,3)=N(2,2,4)=6$.

\subsection{Kac's theorem}

Pick a natural number $a\geq 2$ and define the numbers $a_i$ as follows:

\begin{equation}\label{eq:Ric}
  \begin{cases}
         a_0=0\\
				 a_1=1\\
				 a_i=a a_{i-1}-a_{i-2}
  \end{cases}
\end{equation}

If $a=2$ then the $a_i$ are all natural numbers together with $0$, while for $a=3$ we obtain Fibonacci numbers of odd position. The next theorem, due to Kac (see \cite[Theorem 4]{Kac}), states that any generic element (i.e. any element contained in a certain dense open subset) of $\CC^a\otimes\CC^b\otimes\CC^c$, where $2\leq a\leq b\leq c$, can be reduced to a canonical form under the action of $GL_b(\mathbb{C})\times GL_c(\mathbb{C})$ provided $a,b,c$ satisfy a certain relation:

\begin{thm}\label{thm:Kac}
Let $a,b,c\in\mathbb{N}$ such that $2\leq a\leq b\leq c$ and

\begin{equation}\label{eq:DisKac}
c^2+b^2-abc\geq 1
\end{equation}

Then there exist unique $n,m,i\in\mathbb{N}\cup\{0\}$ satisfying

\begin{equation}\label{eq:TKac}
  \begin{cases}
               b=na_i+ma_{i+1}\\
               c=na_{i+1}+ma_{i+2}\\
  \end{cases}
\end{equation}

such that any generic element of $\CC^a\otimes\CC^b\otimes\CC^c$ decomposes via the action of $GL_b(\CC)\times GL_c(\CC)$ in $n$ blocks of dimension $a\times a_i\times a_{i+1}$ and $m$ blocks of dimension $a\times a_{i+1}\times a_{i+2}$, which are called \textit{Fibonacci blocks}.
\end{thm}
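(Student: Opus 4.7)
The plan is to prove the statement by induction on $b+c$, reducing via castling transformations to a one-block base case. The key observation is that the defining recursion $a_{i+1}=aa_i-a_{i-1}$ is itself a castling transformation: applied to the triple $(a,a_{i-1},a_i)$ it produces $(a,a_{i-1},aa_{i-1}-a_i)=(a,a_{i-1},a_{i-2})$, so each Fibonacci block $(a,a_i,a_{i+1})$ is castling-equivalent to the elementary block $(a,1,a)$. The latter corresponds to $\CC^a\otimes\CC\otimes\CC^a\simeq\mathrm{Mat}_{a\times a}(\CC)$, on which $GL_a(\CC)\times GL_a(\CC)$ acts by left and right multiplication with the invertible matrices forming a single dense orbit; this handles the case of a single Fibonacci block.

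Before treating the decomposition I would settle the number-theoretic existence and uniqueness of $(n,m,i)$. A short induction on $i$ gives $a_{i+1}^2-a_ia_{i+2}=1$, so the vectors $(a_i,a_{i+1})$ and $(a_{i+1},a_{i+2})$ form a $\mathbb{Z}$-basis of $\mathbb{Z}^2$. I would then study the closed cones $\mathcal{C}_i:=\mathbb{R}_{\geq 0}(a_i,a_{i+1})+\mathbb{R}_{\geq 0}(a_{i+1},a_{i+2})$ and show that they cover, with overlap only along their boundaries, the sector $\{(b,c):0<b\leq c,\ c/b\geq\lambda\}$, where $\lambda=(a+\sqrt{a^2-4})/2$ is the larger root of $t^2-at+1=0$. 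The hypothesis $c^2+b^2-abc\geq 1$ rearranges to $(c/b)^2-a(c/b)+1\geq 1/b^2>0$, which combined with $c\geq b\geq a\geq 2$ forces $c/b>\lambda$; this pins down a unique index $i$ and hence unique non-negative integers $n,m$.

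The decomposition itself I would prove by induction on $b+c$. The base case is handled by the first paragraph once $i$ has been reduced to $1$. For the inductive step, given a generic tensor $T\in\CC^a\otimes\CC^b\otimes\CC^c$ with $(b,c)\in\mathcal{C}_i$ and $i\geq 2$, apply the castling $(a,b,c)\rightsquigarrow(a,b,ab-c)$: a direct computation using $a_{j-1}=aa_j-a_{j+1}$ shows that the new pair lies in $\mathcal{C}_{i-1}$ with the same parameters $(n,m)$. By Proposition~\ref{prop:castling} together with Remark~\ref{rem:referee}, castling is an equivariant bijection of orbits, so a generic tensor in the original space corresponds to a generic tensor in the smaller one; by inductive hypothesis the latter admits a Fibonacci block decomposition, which transforms back through the inverse castling into the required decomposition of $T$ with blocks of sizes $a\times a_i\times a_{i+1}$ and $a\times a_{i+1}\times a_{i+2}$.

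The main obstacle is the last step: showing concretely that the castling bijection of orbits transports a block-diagonal canonical form between the two tensor spaces. The orbit bijection in Proposition~\ref{prop:castling} is defined through passage to the dual and a choice of generic complement, and one must verify that this construction respects the block-diagonal structure, equivalently that the indecomposable summands of a generic tensor are cut out cleanly by the castling. Kac's original proof in \cite{Kac} sidesteps this technicality by recasting the problem in terms of representations of a quiver with dimension vector $(a,b,c)$: the Fibonacci blocks then appear as the indecomposable representations in the tame case and are governed by the imaginary roots of the associated Kac--Moody algebra.
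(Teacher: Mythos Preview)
The paper does not give a proof of Theorem~\ref{thm:Kac}: it is quoted from \cite{Kac}, and the remark immediately following the statement explicitly declines to reproduce the argument, noting only that Kac's proof is carried out in the language of quiver representations and that castling transformations correspond to reflection functors. So there is no ``paper's own proof'' to compare your attempt against; what you have written is an attempt to supply the direct tensor-space argument that the paper merely alludes to.

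As a sketch your outline is reasonable, and the number-theoretic part (the determinant identity $a_{i+1}^2-a_ia_{i+2}=1$, the cone decomposition, and the deduction $c/b>\lambda$ from $c^2+b^2-abc\geq 1$) is correct. There are, however, two gaps in the decomposition argument. First, the base case is mis-stated: reducing $i$ by repeated castling does not land you on a \emph{single} Fibonacci block but on the case $i=0$, where $(b,c)=(m,\,n+ma)$ with arbitrary $(n,m)$; you still have to show directly that a generic tensor in $\CC^a\otimes\CC^m\otimes\CC^{n+ma}$ splits under $GL_m\times GL_{n+ma}$ into $m$ blocks of type $a\times 1\times a$ plus $n$ trivial pieces (this is essentially the rank normal form for the associated $(am)\times(n+ma)$ matrix, so it is not hard, but your first paragraph does not cover it). Second, the obstacle you flag---that the orbit bijection of Proposition~\ref{prop:castling} transports block-diagonal forms---is real but can in fact be dealt with directly: viewing $T$ as an injection $\CC^c\hookrightarrow\CC^a\otimes\CC^b$ and the castled $T'$ as the quotient map $\CC^a\otimes\CC^b\twoheadrightarrow\CC^{ab-c}$, a splitting $\CC^b=\bigoplus B_j$ that makes $T'$ block-diagonal forces $\ker(T')=\bigoplus(\ker(T')\cap(\CC^a\otimes B_j))$, and since $T$ is an isomorphism onto this kernel one obtains a compatible splitting of $\CC^c$. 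In quiver language this is exactly the statement that reflection functors are additive and send indecomposables to indecomposables, combined with Krull--Schmidt, which is why Kac's original framework is the natural home for the proof.
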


What this means is that, assuming $\CC^a=Span\{u_1,\ldots,u_a\}$, $\CC^b=Span\{v_1,\ldots,v_b\}$ and $\CC^c=Span\{w_1,\ldots,w_c\}$, any generic tensor $x$ of $\CC^a\otimes\CC^b\otimes\CC^c$ can be transformed by the action of $GL_b(\CC)\times GL_c(\CC)$ into a tensor $x'=\sum_{j=1}^a\sum_{k=1}^b\sum_{l=1}^c \lambda_{jkl} u_j\otimes v_k\otimes w_l$ whose sole possibly non-zero entries are the $\lambda_{jkl}$ with, respectively:

\begin{enumerate}
 \item $k=a_i(t-1)+1,\ldots,a_i t$ and $l=a_{i+1}(t-1)+1,\ldots,a_{i+1}t$ for $t=1,\ldots,n$ (these are the $n$ blocks of dimension $a\times a_i\times a_{i+1}$).
 \item $k=a_i n+a_{i+1}(s-1)+1,\ldots,a_i n+a_{i+1}s$ and $l=a_{i+1}n+a_{i+2}(s-1)+1,\ldots,a_{i+1}n+a_{i+2}s$ for $s=1,\ldots,m$ (these are the $m$ blocks of dimension $a\times a_{i+1}\times a_{i+2}$).
\end{enumerate}

For example, in the case $(3,10,27)$ we find $n=1$, $m=3$ and $i=1$ (so that $a_i$=1, $a_{i+1}=3$ and $a_{i+2}=8$), which means the generic tensor of $(3,10,27)$ decomposes under the action of $GL_{10}(\CC)\times GL_{27}(\CC)$ into a tensor $x'=\sum_{j=1}^3\sum_{k=1}^{10}\sum_{l=1}^{27} \lambda_{jkl} u_j\otimes v_k\otimes w_l$ whose sole possibly non-zero entries are the $\lambda_{jkl}$ with, respectively:

\begin{enumerate}
 \item $k=1$, $l=1,2,3$ (the only block of dimension $3\times 1\times 3$).
 \item $k=2,3,4$, $l=4,\ldots,11$ (a first block of dimension $3\times 3\times 8$).
 \item $k=5,6,7$, $l=12,\ldots,19$ (a second block of dimension $3\times 3\times 8$).
 \item $k=8,9,10$, $l=20,\ldots,27$ (a third block of dimension $3\times 3\times 8$).
\end{enumerate}

\begin{rem}
Kac's original proof was carried out in the framework of quiver representation theory; this subject is not directly related to the problem of prehomogeneity of vector spaces, but it is possible to prove that castling transformations work on vector spaces exactly as reflection functors do on quiver representations (see \cite{BGP} and \cite{KR}). Since this paper does not need any other tool provided by quiver representation theory, we do not show the proof of Theorem \ref{thm:Kac}.
\end{rem}

The importance of Kac's theorem for this paper is explained by the next proposition.

\begin{prop}\label{prop:pKac}
Let $a,b,c\in\mathbb{N}$ such that $2\leq a\leq b\leq c$ and (\ref{eq:DisKac}) is satisfied; then $(a,b,c)$ is prehomogeneous.
\end{prop}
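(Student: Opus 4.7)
The plan is to apply Kac's theorem to reduce the generic tensor to a block-diagonal form supported on a subspace $B\subset V$, to prove that each Fibonacci block is prehomogeneous even when only two of the three $GL$ factors act, and then to conclude via the density of $(GL_b\times GL_c)\cdot B$ in $V$.

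By Theorem \ref{thm:Kac}, there exist $n,m,i\in\mathbb{N}\cup\{0\}$ with $b=na_i+ma_{i+1}$ and $c=na_{i+1}+ma_{i+2}$ such that every generic element of $V:=\CC^a\otimes\CC^b\otimes\CC^c$ can be transformed, via $GL_b(\CC)\times GL_c(\CC)$, into a tensor supported on the subspace $B\subset V$ consisting of $n$ copies of $\CC^a\otimes\CC^{a_i}\otimes\CC^{a_{i+1}}$ and $m$ copies of $\CC^a\otimes\CC^{a_{i+1}}\otimes\CC^{a_{i+2}}$; equivalently, $(GL_b\times GL_c)\cdot B$ is dense in $V$. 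Let $H:=(GL_{a_i}^n\times GL_{a_{i+1}}^m)\times(GL_{a_{i+1}}^n\times GL_{a_{i+2}}^m)$ be the block-diagonal subgroup of $GL_b\times GL_c$, which acts on each Fibonacci summand of $B$ independently.

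The key step is the claim, proved by induction on $j$, that each Fibonacci block $\CC^a\otimes\CC^{a_j}\otimes\CC^{a_{j+1}}$ is prehomogeneous already as a $GL_{a_j}\times GL_{a_{j+1}}$-module (no $GL_a$ needed). The base case $j=1$ gives $\CC^a\otimes\CC\otimes\CC^a\cong M_{a\times a}(\CC)$, on which $GL_1\times GL_a$ acts by scaling and one-sided multiplication, with the invertible matrices forming a dense orbit. For the inductive step, view the block as $W\otimes\CC^{a_{j+1}}$ where $W:=\CC^a\otimes\CC^{a_j}$ is a $GL_{a_j}$-module of dimension $aa_j$; Proposition \ref{prop:castling} (taking $q=aa_j-a_{j+1}=a_{j-1}$) together with Remark \ref{rem:referee} (applicable since all the groups involved are products of general linear groups and hence reductive) reduces the prehomogeneity of $W\otimes\CC^{a_{j+1}}$ under $GL_{a_j}\times GL_{a_{j+1}}$ to that of $\CC^a\otimes\CC^{a_j}\otimes\CC^{a_{j-1}}$ under $GL_{a_j}\times GL_{a_{j-1}}$, which after reordering factors is the instance $j-1$ of the claim.

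Since each summand of $B$ is prehomogeneous under a factor of $H$ and the factors act independently, $B$ is prehomogeneous under $H$: pick $y\in B$ with $\overline{H\cdot y}=B$. The morphism $\Phi:(GL_b\times GL_c)\times B\to V$ has dense image by Kac's theorem, and since $H\cdot y$ is dense in $B$, the restriction of $\Phi$ to $(GL_b\times GL_c)\times(H\cdot y)$ still has dense image; but this image is exactly $(GL_b\times GL_c)\cdot y$ (because $H\subset GL_b\times GL_c$), so $(a,b,c)$ is prehomogeneous, in fact even for the action of $GL_b\times GL_c$ alone. The delicate point in this plan is the inductive step, where one must invoke Remark \ref{rem:referee} to identify castling-equivalent spaces also when only part of the group acts; the reductivity of all groups involved ensures that this is legitimate.
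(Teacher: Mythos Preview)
Your proposal is correct and follows essentially the same route as the paper: you prove the auxiliary lemma that each Fibonacci block $\CC^a\otimes\CC^{a_j}\otimes\CC^{a_{j+1}}$ is prehomogeneous for $GL_{a_j}\times GL_{a_{j+1}}$ by induction via Proposition~\ref{prop:castling} and Remark~\ref{rem:referee}, introduce the block-diagonal subgroup $H\subset GL_b\times GL_c$, and combine the dense $H$-orbit in the block subspace with the density of the $(GL_b\times GL_c)$-saturation coming from Kac's theorem. The only cosmetic difference is the direction of the induction (you descend from $j$ to $j-1$, the paper ascends from $i$ to $i+1$), which is immaterial since castling is an equivalence.
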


In order to prove it, we need a lemma.

\begin{lem}
$\CC^a\otimes\CC^{a_i}\otimes\CC^{a_{i+1}}$, where the $a_i$ are defined by (\ref{eq:Ric}), is prehomogeneous for the action of $GL_{a_i}(\CC)\times GL_{a_{i+1}}(\CC)$ (i.e. Fibonacci blocks are prehomogeneous).
\end{lem}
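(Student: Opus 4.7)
I would prove this by induction on $i$, using Proposition \ref{prop:castling} and Remark \ref{rem:referee}, with the key algebraic observation being that the recurrence $a_{i+1} = a\, a_i - a_{i-1}$ identifies the castling transform of the triple $(a, a_i, a_{i-1})$ (with respect to the third coordinate) as precisely $(a, a_i, a_{i+1})$.

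For the base case $i=1$, the space is $\mathbb{C}^a\otimes\mathbb{C}^1\otimes\mathbb{C}^a\cong M_a(\mathbb{C})$, and the action of $GL_1\times GL_a$ factors through the right-multiplication action of $GL_a$ on $M_a(\mathbb{C})$. Here the orbit of the identity matrix is the whole $GL_a\subset M_a$, which is Zariski dense, so prehomogeneity is immediate. (The degenerate case $i=0$, where $a_0=0$, is trivial.)

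For the inductive step, assuming the statement for $i-1$, set $V:=\mathbb{C}^a\otimes\mathbb{C}^{a_i}$ as a module over $G:=GL_{a_i}(\mathbb{C})$ acting only on the second tensor factor. Then $\dim V = a\, a_i$, and the inductive hypothesis says $V\otimes\mathbb{C}^{a_{i-1}}$ is prehomogeneous for $G\times GL_{a_{i-1}}(\mathbb{C})$. Taking $p:=a_{i-1}$ and $q:=a\, a_i - a_{i-1} = a_{i+1}$, so that $p+q=\dim V$, Proposition \ref{prop:castling} gives prehomogeneity of $V^*\otimes\mathbb{C}^{a_{i+1}}$ for $G\times GL_{a_{i+1}}(\mathbb{C})$. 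Finally, $G\times GL_{a_{i+1}}(\mathbb{C})$ is reductive, so Remark \ref{rem:referee} transports this to prehomogeneity of $V\otimes\mathbb{C}^{a_{i+1}} = \mathbb{C}^a\otimes\mathbb{C}^{a_i}\otimes\mathbb{C}^{a_{i+1}}$ under the same group, closing the induction.

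The main subtlety — not really an obstacle, but the point that needs care — is bookkeeping of the group actions: Proposition \ref{prop:castling} naturally produces $V^*$ rather than $V$ on the first factor, and $V^*$ is not $G$-equivariantly isomorphic to $V$, so one cannot simply identify the two spaces without invoking the reductivity-based statement in Remark \ref{rem:referee}. Once this is in place, the recurrence $a_{i+1} = a\, a_i - a_{i-1}$ does all the arithmetic work, and the induction runs smoothly.
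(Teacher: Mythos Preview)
Your proof is correct and follows essentially the same route as the paper: induction on $i$ with the base case $i=1$ handled directly, and the inductive step carried out by applying Proposition~\ref{prop:castling} with $V=\CC^a\otimes\CC^{a_i}$, $p=a_{i-1}$, $q=a\,a_i-a_{i-1}=a_{i+1}$, then invoking Remark~\ref{rem:referee} to pass from $V^*$ to $V$. The only cosmetic difference is that the paper phrases the induction as going from index $i$ to $i+1$ rather than from $i-1$ to $i$, and your discussion of the $V$ versus $V^*$ subtlety is more explicit than the paper's.
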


\begin{proof}
We proceed by induction on $i$. If $i=1$ then $a_i=1$ and $a_{i+1}=a$, and clearly $\CC^a\otimes\CC^a\simeq\CC^a\otimes\CC\otimes\CC^a$ is prehomogeneous for the action of $GL_1(\CC)\times GL_a(\CC)$. Assume now $i\geq 2$: $\CC^a\otimes\CC^{a_i}\otimes\CC^{a_{i+1}}$ is prehomogeneous for the action of $GL_{a_i}(\CC)\times GL_{a_{i+1}}(\CC)$ (by the induction hypothesis), hence if we call $V:=\CC^a\otimes\CC^{a_{i+1}}$, $G:=GL_{a_{i+1}}(\CC)$, $p:=a_i$ and $q:=aa_{i+1}-a_i=a_{i+2}$, Proposition \ref{prop:castling} and Remark \ref{rem:referee} grant us that $V\otimes\CC^{a_{i+2}}=\CC^a\otimes\CC^{a_{i+1}}\otimes\CC^{a_{i+2}}$ is prehomogeneous for the action of $G\times GL_{a_{i+2}}(\CC)=GL_{a_{i+1}}(\CC)\times GL_{a_{i+2}}(\CC)$.
\end{proof}

In order to prove Proposition \ref{prop:pKac}, we will want to consider the groups $GL_{a_i}(\CC)\times GL_{a_{i+1}}(\CC)$ and $GL_{a_{i+1}}(\CC)\times GL_{a_{i+2}}(\CC)$ as embedded in the larger group $GL_b(\CC)\times GL_c(\CC)$ in the following way: we will call \textit{block elements} of $GL_b(\CC)\times GL_c(\CC)$ those elements $(M,N)$ whose sole possibly non-zero entries are:

\begin{enumerate}
 \item $m_{kl}$ for $k,l=a_i(t-1)+1,\ldots,a_i t$ with $t=1,\ldots,n$ (this gives $n$ blocks $a_i\times a_i$ along the diagonal of $M$ provided $a_i\neq 0$, $n\neq 0$) and for $k,l=a_i n+a_{i+1}(s-1)+1,\ldots,a_i n+a_{i+1}s$ with $s=1,\ldots,m$ (this gives $m$ blocks $a_{i+1}\times a_{i+1}$ along the diagonal of $M$ provided $m\neq 0$).
 \item $n_{kl}$ for $k,l=a_{i+1}(t-1)+1,\ldots,a_{i+1}t$ with $t=1,\ldots,n$ (this gives $n$ blocks $a_{i+1}\times a_{i+1}$ along the diagonal of $N$ provided $n\neq 0$) and for $k,l=a_{i+1}n+a_{i+2}(s-1)+1,\ldots,a_{i+1}n+a_{i+2}s$ with $s=1,\ldots,m$ (this gives $m$ blocks $a_{i+2}\times a_{i+2}$ along the diagonal of $N$ provided $m\neq 0$)
\end{enumerate}

For example, considering again $(3,10,27)$, the block elements of $GL_{10}(\CC)\times GL_{27}(\CC)$ are $(M,N)$ with $M$ and $N$ having as only possibly non-zero elements: 

\begin{enumerate}
 \item $m_{11}$ and $m_{kl}$ for $k,l=2,3,4$, $k,l=5,6,7$ or $k,l=8,9,10$ (one block of dimension $1\times 1$ and three blocks of dimension $3\times 3$).
 \item $n_{kl}$ for $k,l=1,2,3$, $k,l=4,\ldots,11$, $k,l=12,\ldots,19$ or $k,l=20,\ldots,27$ (one block of dimension $3\times 3$ and three blocks of dimension $8\times 8$).
\end{enumerate}

We can now prove Proposition \ref{prop:pKac}:

\begin{proof}
Using Theorem \ref{thm:Kac} we can say there are unique $n,m,i\in\mathbb{N}\cup\{0\}$ such that $b=na_i+ma_{i+1}$ and $c=na_{i+1}+ma_{i+2}$, and a proper closed subset $C\subseteq\CC^a\otimes\CC^b\otimes\CC^c$ such that any element of $\CC^a\otimes\CC^b\otimes\CC^c-C$ decomposes under the action of $GL_b(\CC)\times GL_c(\CC)$ into a tensor consisting of $n$ blocks of dimension $a\times a_i\times a_{i+1}$ and $m$ blocks of dimension $a\times a_{i+1}\times a_{i+2}$ (the Fibonacci blocks). 

Call now $X:=\CC^a\otimes\CC^b\otimes\CC^c$, $G:=GL_b(\CC)\times GL_c(\CC)$, $Y$ the subset of $X$ containing tensors made of Fibonacci blocks, $H$ the subgroup of block elements of $G$ and $W:=G\cdot Y=\{g\cdot y|g\in G\mbox{, }y\in Y\}$; then for any $x\in X-C$ there exist $g\in G$, $y\in Y$ such that $g\cdot x=y$ i.e. $x=g^{-1}\cdot y$; this means $X-C\subseteq W$, so $W$ is dense in $X$.

By the previous lemma $H$ acts on $Y$ with a dense orbit, which we can write as $H\cdot y$ for some $y\in Y$; this means $Y\subseteq\cl_X(H\cdot y)$, from which it follows that $Y\subseteq\cl_X(G\cdot y)$. Applying the action of $G$ we obtain $X-C\subseteq W=G\cdot Y\subseteq G\cdot\cl_X(G\cdot y)=\cl_X(G\cdot y)$, which means $G\cdot y$ is dense in $X$.
\end{proof}

Kac's theorem thus gives us (thanks to Proposition \ref{prop:pKac}) a sufficient condition for the prehomogeneity of an $n$-tuple $(a_1,\ldots,a_n)$ that is at the same time easier to check than (\ref{eq:CondBase}) and much stronger than the one provided by Corollary \ref{cor:CSuff}. Assume there are elements $a_{i_1}$ and $a_{i_2}$ in the $n$-tuple which are both bigger than (or equal to) the product $q$ of the remaining $n-2$ elements $a_i$ (we can assume, up to a permutation of the indices, $i_1=n-1$, $i_2=n$): if $q\geq 2$ and $(q,a_{n-1},a_n)$ satisfies (\ref{eq:DisKac}), then $\CC^q\otimes\CC^{a_{n-1}}\otimes\CC^{a_n}\simeq\CC^{a_1}\otimes\dots\otimes\CC^{a_n}$ is prehomogeneous for the action of $GL_{a_{n-1}}(\CC)\times GL_{a_n}(\CC)$, so \textit{a fortiori} the $n$-tuple $(a_1,\ldots,a_n)$ is prehomogeneous for the action of $GL_{a_1}(\CC)\times\dots\times GL_{a_n}(\CC)$. As we shall see, Kac's theorem is crucial when $n=3$.

\subsection{Weierstrass' form}

The following result dates back to Weierstrass, but we will give a partial proof of it for the convenience of the reader.

\begin{prop}\label{prop:Weierstrass}
$(2,k,k)$ is prehomogeneous $\Longleftrightarrow$ $k\leq 3$.
\end{prop}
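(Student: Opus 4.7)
The plan is to reinterpret $\mathbb{C}^2\otimes\mathbb{C}^k\otimes\mathbb{C}^k$ as the space of pencils of $k\times k$ matrices, on which $GL_k(\mathbb{C})\times GL_k(\mathbb{C})$ acts by $(P,Q)\cdot(A,B)=(PAQ^T,PBQ^T)$ and $GL_2(\mathbb{C})$ acts by $g\cdot(A,B)=(aA+bB,cA+dB)$. The key tool is Weierstrass' normal form for regular pencils: for a generic $(A,B)$ the matrix $A$ is invertible and $A^{-1}B$ has $k$ pairwise distinct eigenvalues $\lambda_1,\ldots,\lambda_k$, so a suitable $(P,Q)\in GL_k\times GL_k$ brings $(A,B)$ to the canonical form $(I,D)$ with $D=\mathrm{diag}(\lambda_1,\ldots,\lambda_k)$.

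Next, I would analyze the residual action on canonical pairs: applying $g\in GL_2$ to $(I,D)$ yields $(aI+bD,\,cI+dD)$, which after re-normalization by multiplying on the left with the $GL_k$-element $(aI+bD)^{-1}$ becomes $(I,D')$ with $D'=\mathrm{diag}\bigl((c+d\lambda_i)/(a+b\lambda_i)\bigr)$. Thus $GL_2$ induces a Möbius transformation on the eigenvalues viewed as points of $\mathbb{P}^1$, while permutation matrices inside $GL_k$ permute them. The classification of generic orbits therefore reduces to the $PGL_2$-action on unordered $k$-tuples of distinct points of $\mathbb{P}^1$.

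For the $\Leftarrow$ direction ($k\leq 3$), since $PGL_2$ is sharply $3$-transitive every generic unordered $k$-tuple with $k\leq 3$ can be moved to a standard one, yielding a single dense orbit in $V$ (the case $k=1$ is trivial). For the $\Rightarrow$ direction ($k\geq 4$) I would invoke the necessary condition (\ref{eq:CondBase}) and compute the stabilizer of $(I,D)$ at a generic $D$: by cross-ratio rigidity the only Möbius permuting a generic $k$-tuple of $\geq 4$ points is the identity, so the $GL_2$-part of the stabilizer is reduced to the scalars $\mathbb{C}^{\ast}$; the remaining equations then force $P$ to be an arbitrary invertible diagonal matrix, with $Q$ determined by $P$. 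The generic stabilizer is therefore $(k+1)$-dimensional, hence the generic orbit has dimension $\dim G-(k+1)=2k^2-k+3<2k^2=\dim V$ whenever $k\geq 4$, contradicting (\ref{eq:CondBase}).

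The main technical point is the Weierstrass reduction itself, together with verifying that it applies on a Zariski-dense open subset (so that the stabilizer computation is genuinely at a generic point), and the careful bookkeeping of the induced $PGL_2$- and $S_k$-actions on the eigenvalues; once these are in place, the rest is a routine dimension count.
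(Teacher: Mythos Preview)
Your proposal is correct and follows essentially the same route as the paper: both identify $\mathbb{C}^2\otimes\mathbb{C}^k\otimes\mathbb{C}^k$ with matrix pencils, reduce a generic element to the Weierstrass form $(I,D)$, and then bound the stabilizer dimension from below by $k+1$ to violate condition~(\ref{eq:CondBase}) for $k\geq 4$. The only difference is that the paper merely exhibits the $(k+1)$-dimensional subgroup $\{(\alpha I_2,\alpha^{-1}C,C^{-1}):C\text{ diagonal}\}$ and cites the implication $\Leftarrow$ as well known, whereas you go further and (i) pin down the stabilizer exactly via the cross-ratio argument, and (ii) supply a self-contained proof of $\Leftarrow$ using the sharp $3$-transitivity of $PGL_2$ on $\mathbb{P}^1$.
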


The implication $\Leftarrow$ is well known. Indeed, for $k=2,3$ the space $\mathbb{C}^2\otimes\mathbb{C}^k\otimes\mathbb{C}^k$ has finitely many orbits (for a complete classification of such spaces see \cite{Par1} and \cite{Par2}, or the table at page 90 of \cite{Man}).

\begin{proof}
If $\CC^2=Span\{e_1,e_2\}$ and $\CC^k=Span\{u_1,\ldots,u_k\}$, any element $x$ in $(2,k,k)$ can be written as a matrix pencil in this way: 

\begin{equation*}
x=\sum_{i=1}^2\sum_{j=1}^k\sum_{l=1}^k\lambda_{ijl}e_i\otimes u_j\otimes u_l =e_1\otimes(\sum_{j=1}^k\sum_{l=1}^k\lambda_{1jl}u_j\otimes u_l)+e_2\otimes(\sum_{j=1}^k\sum_{l=1}^k\lambda_{2jl}u_j\otimes u_l)=e_1\otimes A + e_2\otimes B
\end{equation*} 

where $A,B\in M_k(\CC)$ (they are the two `slices' of the element $x$). When viewing tensors of $(2,k,k)$ in this way, the action of the group $GL_2(\CC)\times GL_k(\CC)\times GL_k(\CC)$ can be read as

\begin{equation}\label{eq:azione}
(L,M,N)\cdot(e_1\otimes A + e_2\otimes B)=M(e_1\otimes\overline{A}+e_2\otimes\overline{B})N^t\mbox{ where }\overline{A}:=l_{11}A+l_{12}B,\overline{B}:=l_{21}A+l_{22}B
\end{equation}

Assume $A$ to be invertible and $A^{-1}B$ to be diagonalizable, i.e.

\begin{equation} \label{eq:IPOTESI}
A\in GL_k(\mathbb{C})\mbox{ and }A^{-1}B=G^{-1}DG\mbox{ for some }G\in GL_k(\mathbb{C}),D=diag(d_i)\in M_k(\mathbb{C})
\end{equation} 

Under these hypotheses, if we call $y$ the tensor which is represented as $e_1\otimes I_k+e_2\otimes D$ we can write:

\begin{equation*}
\begin{split}
x & =e_1\otimes A+e_2\otimes B=A(e_1\otimes I_k+e_2\otimes A^{-1}B)=A(e_1\otimes I_k+e_2\otimes G^{-1}DG) = \\
& = A(e_1\otimes G^{-1}G+e_2\otimes G^{-1}DG)=AG^{-1}(e_1\otimes I_k+e_2\otimes D)G=(I_2,AG^{-1},G^t)\cdot y
\end{split}
\end{equation*}

This means that $x$ belongs to the same orbit as $y$, which implies $x$ and $y$ have isotropy groups of the same dimension. If we set, for a diagonal matrix $M=diag(m_i)$, $M^{-1}:=diag(m_i^{-1})$ (with the convention that $m_i^{-1}:=0$ for $m_i=0$), using (\ref{eq:azione}) we can see that $G_y$ contains all elements of the form $(\alpha I_2,\alpha^{-1}\beta D,\beta^{-1}D^{-1})$ for $\alpha,\beta\in\CC$, i.e. all elements of the form $(\alpha I_2,\alpha^{-1} C, C^{-1})$ where $C\in M_k(\CC)$ is diagonal; this means $dim(G_x)=dim(G_y)\geq k+1$.

The set of pairs $(A,B)$ such that $A$ is invertible and $A^{-1}B$ is diagonalizable contains a dense open subset of $M_k(\CC)\times M_k(\CC)$, so the tensors in $(2,k,k)$ which \textit{do not} satisfy the hypotheses (\ref{eq:IPOTESI}) have isotropy groups of dimension at least equal to that of isotropy groups of elements in $(2,k,k)$ which satisfy (\ref{eq:IPOTESI}). As a consequence, for $(2,k,k)$ we have $d_m=k+1$ thus the necessary condition for prehomogeneity becomes

\begin{equation*}
4+2k^2-(k+1)\geq 2k^2 \Longleftrightarrow k\leq 3
\end{equation*}
\end{proof}

Of course something similar holds for $n$-tuples:

\begin{cor}
The $n$-tuple $(1,\dots,1,2,k,k)$ is prehomogeneous $\Longleftrightarrow$ $k\leq 3$.
\end{cor}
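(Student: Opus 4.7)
The plan is to reduce the corollary to Proposition \ref{prop:Weierstrass} by exploiting the fact that a factor $\mathbb{C}^1$ in a tensor product, together with its associated $GL_1(\mathbb{C})$ factor in the acting group, is completely redundant as far as orbits are concerned. Specifically, I would first prove the general reduction principle: for any $(a_1,\ldots,a_n)\in\mathbb{N}^n$, the $(n+1)$-tuple $(1,a_1,\ldots,a_n)$ is prehomogeneous if and only if $(a_1,\ldots,a_n)$ is prehomogeneous.

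The justification is elementary. Under the canonical identification $\CC^1\otimes\CC^{a_1}\otimes\dots\otimes\CC^{a_n}\simeq\CC^{a_1}\otimes\dots\otimes\CC^{a_n}$, an element $(\lambda,g_1,\ldots,g_n)\in GL_1(\CC)\times GL_{a_1}(\CC)\times\dots\times GL_{a_n}(\CC)$ acts on a decomposable tensor $v_1\otimes\dots\otimes v_n$ as $(\lambda g_1 v_1)\otimes g_2 v_2\otimes\dots\otimes g_n v_n$, which coincides with the action of $(\lambda g_1,g_2,\ldots,g_n)\in GL_{a_1}(\CC)\times\dots\times GL_{a_n}(\CC)$. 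Hence the images of the two representations in $GL(\CC^{a_1}\otimes\dots\otimes\CC^{a_n})$ are identical, so the two actions have the same orbits, and one admits a dense orbit if and only if the other does.

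Iterating this reduction $n-3$ times strips away all the leading $1$'s from $(1,\ldots,1,2,k,k)$, showing that $(1,\ldots,1,2,k,k)$ is prehomogeneous if and only if $(2,k,k)$ is prehomogeneous. Proposition \ref{prop:Weierstrass} then gives that this happens if and only if $k\leq 3$. There is essentially no technical obstacle here: the argument is purely formal, the only conceptual ingredient being the observation that the scalar action of an extra $GL_1(\CC)$ factor is already contained in the scalar action of any other $GL_{a_i}(\CC)$ factor.
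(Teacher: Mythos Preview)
Your argument is correct and is precisely the reasoning the paper intends: the corollary is stated without proof, introduced only by ``Of course something similar holds for $n$-tuples'', so the expected justification is exactly the observation that tensoring with $\CC^1$ and enlarging the group by a $GL_1(\CC)$ factor does not alter the orbit structure, reducing everything to Proposition~\ref{prop:Weierstrass}. Nothing further is needed.
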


\section{The case $n=3$}

Consider $(a,b,c)$ with $a,b,c\geq 0$: as we already said, if one of the entries is zero then the triplet is trivially prehomogeneous; to avoid these trivial cases, we could restrict our examination to $(a,b,c)\in\mathbb{N}^3$. However, it is possible for a triplet $(a,b,c)\in\mathbb{N}^3$ to belong to a castling-equivalence class whose minimal element $(a',b',c')$ has a zero entry (for example $(1,1,1)$ is castling-equivalent to $(0,1,1)$, and $(2,2,4)$ is castling-equivalent to $(0,2,2)$). In what follows we will always start with $(a,b,c)\in\mathbb{N}^3$, to avoid cases which are trivial from the beginning; in order not to give too much importance to the triplets $(a,b,c)$ with a zero entry, when speaking of minimal element (or space) of a castling-equivalence class we will actually mean 'element (space) of minimal \textit{positive} dimension' of the castling-equivalence class (this element is still unique). For example, we will call $(1,1,1)$ minimal.\\

A triplet $(a,b,c)\in\mathbb{N}^3$ not satisfying

\begin{equation}\label{eq:CondNecN}
N(a,b,c)\geq 0
\end{equation}

cannot be prehomogeneous (recall (\ref{eq:CondNec}) and (\ref{eq:DefN})); combining explicit computation (via Macaulay2) and castling transformations, we found no other triplets but those of type $(2,k,k)$ with $k\geq 4$ (and those castling-equivalent to them) which are not prehomogeneous but satisfy (\ref{eq:CondNecN}). This led us to conjecture, and then prove, that this necessary condition is in most cases also sufficient.

\begin{prop}\label{prop:N0C3}
Let $(a,b,c)\in\mathbb{N}^3$; if $N(a,b,c)=0$ then $(a,b,c)=(1,1,1)$.
\end{prop}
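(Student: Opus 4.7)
The plan is to prove the contrapositive. Take any $(a,b,c)\in\mathbb{N}^3$ with $N(a,b,c)=0$, i.e., $a^2+b^2+c^2-abc=2$, and permute so that $a\leq b\leq c$. If $a=1$ the equation reduces to $b^2+c^2-bc=1$, which I rewrite as $(b-c)^2+bc=1$; since $b,c\geq 1$ both terms are nonnegative and the second is at least $1$, forcing $b=c=1$ and hence $(a,b,c)=(1,1,1)$.

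The substantive task is to rule out $a\geq 2$, which I would handle by Vieta jumping. Suppose for contradiction that a sorted solution with $a\geq 2$ exists, and choose one minimizing $a+b+c$. Viewed as a quadratic in $c$, the equation $c^2-abc+(a^2+b^2-2)=0$ has companion root $c':=ab-c$; the root product $cc'=a^2+b^2-2\geq 6$ is strictly positive, so $c'$ is a positive integer. Moreover $c'\geq 2$, for if $c'=1$ then $(a,b,1)$ would be a solution with minimum entry $1$, which by the first paragraph forces $(1,1,1)$ and contradicts $a\geq 2$. Hence $(a,b,c')$, after re-sorting, is another candidate in the class over which we minimize, so $a+b+c'\geq a+b+c$, giving $c\leq ab/2$.

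With $c\leq ab/2$ and $c\geq b$ at hand, set $s:=c-b\geq 0$ and $t:=ab-2c\geq 0$; these satisfy the linear identity $2s+t=b(a-2)$. Expanding $c(ab-c)=a^2+b^2-2$ via $c=b+s$ and $ab-c=b(a-1)-s$, and then substituting $bs(a-2)=s(2s+t)$, the equation collapses to
\[
b^2(a-2)+s(s+t)=a^2-2.
\]
Nonnegativity of $s$ and $s+t$ then forces $b^2(a-2)\leq a^2-2$. For $a=2$ the original equation reads $(b-c)^2=-2$, impossible. For $a\geq 3$, the inequality $(a^2-2)/(a-2)<a^2$ — equivalent to $a^3-3a^2+2>0$, which holds for every $a\geq 3$ — gives $b<a$, contradicting $b\geq a$.

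The main hurdle is producing the descent step, i.e., justifying the reduction to $c\leq ab/2$; the Vieta companion $c':=ab-c$ supplies it, once positivity of $cc'$ and the $a=1$ analysis have been used to secure $c'\geq 2$. After that the auxiliary variables $s,t$ and the identity $2s+t=b(a-2)$ make the rest essentially automatic, so I do not expect any further conceptual ingredient to be needed.
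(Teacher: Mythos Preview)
Your proof is correct and rests on the same Vieta companion root $c'=ab-c$ and, in fact, the same identity as the paper. The paper evaluates $\phi(b)=(b-c)(b-c')=b^2(2-a)+a^2-2$, shows it is negative for $a\ge 3$ (and disposes of $a=1,2$ separately), deduces $c'<b$, and runs an infinite descent down to a triple with two equal entries, which is forced to be $(1,1,1)$; it then needs the extra remark that $(1,1,1)$ is the only positive triple in its castling class. Your minimal-counterexample version runs the same computation in reverse: minimality gives $c'\ge c$, hence $t=c'-c\ge 0$, and your quantity $s(s+t)=(c-b)(c'-b)$ is literally $\phi(b)$, so your displayed identity $b^2(a-2)+s(s+t)=a^2-2$ is the paper's formula rearranged. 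The gain of your packaging is that it yields $(a,b,c)=(1,1,1)$ directly, without the closing observation about the castling class; the paper's gain is that evaluating $\phi(b)$ is slightly quicker than introducing the auxiliary variables $s,t$.
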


To prove this proposition (along with Proposition \ref{prop:N2C3}), we need to define the following order relation on $(\mathbb{N}\cup\{0\})^3$ : given two triplets $(a,b,c)$ and $(a',b',c')$ with $a\leq b\leq c$ and $a'\leq b'\leq c'$, we write $(a,b,c)<(a',b',c')$ if and only if $a<a'$ or $a=a'$, $b<b'$ or $a=a'$, $b=b'$ and $c<c'$.

\begin{proof}
Assume two elements of the triplet $(a,b,c)$ are equal, say $b=c$: in this case we would have $a^2+2b^2-ab^2-2=0$ i.e. $b^2(2-a)=2-a^2$ which gives $b=\sqrt{2+a-\frac{2}{(2-a)}}$. The only $a\in\mathbb{N}$ such that $b\in\mathbb{Z}$ is $a=1$, which gives $b=1$ too: this means $(a,b,c)=(1,1,1)$.\\

But what if $a\neq b\neq c\neq a$? Assume, without loss of generality, $a<b<c$: we will now prove that \textit{there exist $(a',b',c')$ castling-equivalent to $(a,b,c)$ such that $N(a',b',c')=0$ and $\underline{0}<(a',b',c')<(a,b,c)$}.

Define $\phi(x)=x^2-abx+a^2+b^2-2$: $c$ is a root of $\phi(x)$, the other one being some $c'$ satisfying $cc'=a^2+b^2-2$ and $c+c'=ab$; the former equality shows that $c'$ is positive, while the latter tells us that $c'$ is an integer and that $(a,b,c)$ and $(a,b,c')$ are castling-equivalent. This means $c'\in\mathbb{N}$ and, by Proposition \ref{prop:N}, $N(a,b,c')=0$ too.

If we factor $\phi(x)$ using the roots $c$ and $c'$ and then evaluate it in $b$ we obtain

\begin{equation*}
\phi(b)=(b-c)(b-c')=b^2(2-a)+a^2-2
\end{equation*}

Under our assumptions $\phi(b)<0$, unless $a=1,2$; let us check what happens in these cases.

If $a=1$, $b$ and $c$ satisfy $b^2+c^2-bc-1=0$ i.e. $(b-c)^2+bc=1$, and the only solution of this equation is given by $b=c=1$ (contradiction). If $a=2$, $b$ and $c$ satisfy $b^2+c^2-2bc+2=0$ i.e. $(b-c)^2=-2$, which clearly cannot be.

Thus we can say $\phi(b)=(b-c)(b-c')<0$, and since $c>b$ we must conclude $b>c'>0$; this means either $\underline{0}<(a,c',b)<(a,b,c)$ (if $c'\geq a$) or $\underline{0}<(c',a,b)<(a,b,c)$ (if $c'<a$). In any case our claim is proved.\\

We now have all we need to prove the proposition: starting from a triplet $(a,b,c)\in\mathbb{N}^3$ such that $N(a,b,c)=0$ we can follow these steps:

\begin{enumerate}
  \item Check whether $a\neq b\neq c\neq a$.
	\item If the answer to 1. is `no', then at least two elements of $(a,b,c)$ are equal, and this means $(a,b,c)=(1,1,1)$; in this case we stop.
	\item If the answer to 1. is `yes', as we have just shown we can find a triplet $(a',b',c')$ castling-equivalent to $(a,b,c)$ such that $\underline{0}<(a',b',c')<(a,b,c)$ and $N(a',b',c')=0$; in this case we start over from 1. using $(a',b',c')$.
\end{enumerate}

Since the condition $\underline{0}<(a',b',c')<(a,b,c)$ grants termination to the previous `algorithm', and since termination can only happen when we start from point 1. with the triplet $(1,1,1)$, what we have proved is that all $(a,b,c)\in\mathbb{N}^3$ such that $N(a,b,c)=0$ are castling-equivalent to $(1,1,1)$; since $(1,1,1)$ is the only triplet with no zero entries in its castling-equivalence class (the other triplets in that class are $(0,1,1),(1,0,1)$ and $(1,1,0)$) the proposition is proved.
\end{proof}

As an immediate consequence, we obtain that all the triplets $(a,b,c)\in(\mathbb{N}\cup\{0\})^3$ such that $N(a,b,c)=0$ are prehomogeneous. 

\begin{prop}\label{prop:NmC3}
There are no triplets $(a,b,c)\in\mathbb{N}^3$ such that $N(a,b,c)=z$ for $z=1+9k$ or $z=4+9k$ (with $k\in\mathbb{Z})$.
\end{prop}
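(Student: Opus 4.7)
The plan is to reduce the statement to a claim about $N(a,b,c) \bmod 9$. Writing $N(a,b,c)+2 = a^2+b^2+c^2-abc$, the assertion becomes: for all $(a,b,c)\in\mathbb{N}^3$, the integer $a^2+b^2+c^2-abc$ is never congruent to $3$ or $6$ modulo $9$. I would proceed by a case analysis on the number of entries of $(a,b,c)$ divisible by $3$, using the elementary fact that $x^2 \bmod 9$ equals $0$ when $3 \mid x$ and lies in $\{1,4,7\}$ otherwise.

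In the cases where at most two of $a,b,c$ are divisible by $3$, a reduction mod $3$ already suffices. If none of the three is divisible by $3$, then $a^2+b^2+c^2 \equiv 0 \pmod 3$ while $abc \equiv \pm 1 \pmod 3$, so the whole expression is $\equiv \pm 1 \pmod 3$. If exactly one of them, say $a$, is divisible by $3$, one gets $a^2+b^2+c^2-abc \equiv 0+1+1-0 \equiv 2 \pmod 3$. If exactly two, say $a$ and $b$, are divisible by $3$, then in fact $9 \mid a^2$, $9 \mid b^2$ and $9 \mid abc$, and the expression reduces to $c^2 \pmod 9$, which lies in $\{1,4,7\}$. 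In none of these subcases does one land in $\{3,6\}\pmod 9$.

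The remaining case is when all of $a,b,c$ are divisible by $3$, and this is precisely where one must look mod $9$ rather than just mod $3$: a naive mod $3$ computation would yield $-2\equiv 1 \pmod 3$, which is compatible with both $1$ and $4$ modulo $9$. Writing $a=3a'$, $b=3b'$, $c=3c'$, however, one has $a^2+b^2+c^2 \equiv 0 \pmod 9$ and $abc=27 a'b'c' \equiv 0 \pmod 9$, so the whole expression is $\equiv 0 \pmod 9$, again missing $3$ and $6$. The argument is essentially a finite check on residues; the only mild subtlety is the need to refine to mod $9$ in the last case, where the mod $3$ information alone is inconclusive.
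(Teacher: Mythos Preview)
Your argument is correct and follows essentially the same route as the paper: both proofs observe that $a^2+b^2+c^2-abc\equiv 0\pmod 3$ forces $a\equiv b\equiv c\equiv 0\pmod 3$, and then dispose of this remaining case by passing to the finer modulus (you compute directly mod $9$, the paper substitutes $a=3\alpha$, $b=3\beta$, $c=3\gamma$ and reduces again mod $3$, which amounts to the same thing). The only difference is organizational---you split by the number of entries divisible by $3$, the paper tabulates all residue triples mod $3$---but the content is identical.
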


\begin{proof}
For $z$ as in the hypotheses, the equation $N(a,b,c)=z$ becomes, modulo $3$,

\begin{equation}
a^2+b^2+c^2-abc\equiv 0
\end{equation}

which can be solved in integers only for $a,b,c\equiv 0$, as the following table shows:

\begin{center}
\begin{tabular}{|l|c|}
\hline
$(a,b,c)\equiv$ & $N(a,b,c)\equiv$\\
\hline
$(0,0,0)$ & $0$\\
$(0,0,1)$ & $1$\\
$(0,0,2)$ & $1$\\
$(0,1,1)$ & $2$\\
$(0,1,2)$ & $2$\\
$(0,2,2)$ & $2$\\
$(1,1,1)$ & $2$\\
$(1,1,2)$ & $1$\\
$(1,2,2)$ & $2$\\
$(2,2,2)$ & $2$\\
\hline
\end{tabular}
\end{center}

This means the only possible solutions of $N(a,b,c)=z$ have the form $(a,b,c)=(3\alpha,3\beta,3\gamma)$ for some $\alpha,\beta,\gamma\in\mathbb{Z}$; but if we substitute such a triplet in the equation $N(a,b,c)=z$, it becomes $9\alpha^2+9\beta^2+9\gamma^2-27\alpha\beta\gamma-2=\tau+9k$ where $\tau=1$ if $z=1+9k$ and $\tau=4$ if $z=4+9k$. So we have to solve the equation

\begin{center}
$3(\alpha^2+\beta^2+\gamma^2)-9\alpha\beta\gamma-3k=\sigma$
\end{center}

where $\sigma=1$ if $z=1+9k$ and $\sigma=2$ if $z=4+9k$. Since the left-hand side is a multiple of three but the right-hand side is not, our equation has no solutions.
\end{proof}

We have $N(2,k,k)=2$, and these triplets are prehomogeneous if and only if $k\leq 3$; triplets $(2,k,k)$ with $k\geq 4$ are in fact the only non-prehomogeneous minimal triplets such that $N(a,b,c)=2$, since:

\begin{prop}\label{prop:N2C3}
Let $(a,b,c)\in\mathbb{N}^3$; if $N(a,b,c)=2$ then $(a,b,c)$ is castling-equivalent to $(2,k,k)$ for a unique $k\in\mathbb{N}$. In particular $(a,b,c)$ is prehomogeneous if and only if $k\leq 3$.
\end{prop}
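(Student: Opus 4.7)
The plan is to imitate the reduction used in Proposition~\ref{prop:N0C3}. Assume $(a,b,c)\in\mathbb{N}^3$ satisfies $N(a,b,c)=2$, ordered so that $a\le b\le c$. Rewriting $N(a,b,c)=2$ as $c^{2}-abc+(a^{2}+b^{2}-4)=0$, the number $c$ is a root of
\begin{equation*}
\phi(x)=x^{2}-abx+(a^{2}+b^{2}-4),
\end{equation*}
whose other root $c':=ab-c$ is an integer satisfying $cc'=a^{2}+b^{2}-4$. By construction $(a,b,c)\sim (a,b,c')$ via a single castling transformation, and $N(a,b,c')=2$ by Proposition~\ref{prop:N}.

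Next I would compute $\phi(b)=b^{2}(2-a)+a^{2}-4$ and split on the value of $a$. If $a\ge 3$, the bound $b\ge a\ge 3$ gives $\phi(b)\le a^{2}(2-a)+a^{2}-4=-a^{2}(a-3)-4\le -4<0$, so $b$ lies strictly between the roots and $c'<b<c$; since $cc'=a^{2}+b^{2}-4\ge 1$ we also get $c'\in\mathbb{N}$, and the triplet $(a,c',b)$ or $(c',a,b)$ (whichever respects the order) is strictly below $(a,b,c)$ in the well-ordering used before. If $a=2$ then $\phi(x)=(x-b)^{2}$, forcing $c=b$ so that $(a,b,c)=(2,k,k)$ already. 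If $a=1$, the discriminant $12-3b^{2}$ is nonnegative only for $b\in\{1,2\}$, and a direct check yields only $(1,1,2)$, a permutation of $(2,1,1)$. Iterating the reduction, the well-foundedness of the ordering forces termination, and hence every triplet with $N=2$ is castling-equivalent to some $(2,k,k)$ with $k\ge 1$.

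For uniqueness of $k$, I would verify that every $(2,k,k)$ is the minimal element of its castling-equivalence class and invoke the Sato--Kimura theorem quoted in the Introduction. Minimality is a quick check: up to permutation the only nontrivial castling transform of $(2,k,k)$ is $(k,k,k^{2}-2)$, obtained by permuting to $(k,k,2)$ and applying castling; its dimension $k^{2}(k^{2}-2)$ is at least $2k^{2}$ for $k\ge 2$ (equality only at $k=2$, which sends $(2,2,2)$ to itself), and for $k=1$ it is invalid since $k^{2}-2=-1$. Hence the triplets $(2,k,k)$ for distinct $k\ge 1$ lie in distinct castling classes, which gives the uniqueness of $k$.

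Finally, the ``in particular'' assertion is immediate: by Proposition~\ref{prop:castling} combined with Remark~\ref{rem:referee}, castling preserves prehomogeneity, so $(a,b,c)$ is prehomogeneous if and only if $(2,k,k)$ is, and Proposition~\ref{prop:Weierstrass} says that this happens precisely when $k\le 3$. The only delicate point I expect is in the reducing step $a\ge 3$, namely producing clean strict inequalities $c'<b<c$ and positivity $c'\ge 1$ simultaneously; the rest is a routine adaptation of the argument used for $N=0$.
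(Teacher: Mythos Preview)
Your proof is correct and follows essentially the same descent method as the paper: both use the quadratic $\phi(x)=x^{2}-abx+(a^{2}+b^{2}-4)$ and the sign of $\phi(b)$ to produce a strictly smaller castling-equivalent triplet, terminating at $(2,k,k)$. The only organizational difference is that the paper first separates the case ``two entries equal'' (obtaining $(2,k,k)$ or $(k,k,k^{2}-2)$ directly) and then treats $a<b<c$, whereas you split on the value of $a$; your treatment of uniqueness via minimality of $(2,k,k)$ is slightly more explicit than the paper's, which leaves that point implicit.
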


\begin{proof}
The last statement follows from Proposition \ref{prop:Weierstrass}; we only have to prove the first one.

Assume $c=b$. If $b=1$ we obtain $a^2-a-2=0$ i.e. $a=2$ (so $(a,b,c)=(2,1,1))$; if $b>1$ we have $a^2+2b^2-ab^2=4$ i.e. $a=\frac{1}{2}(b^2\pm\sqrt{b^4-8b^2+16})$ which means that $a$ is either $2$ or $b^2-2$; thus our triplet is $(2,b,b)$ or $(b,b,b^2-2)$ (notice that these triplets are castling-equivalent, once we fix $b>1$).\\

Assume now $a\neq b\neq c\neq a$ and, without loss of generality, $a<b<c$; we will prove that \textit{there is a triplet $(a',b',c')$ castling-equivalent to $(a,b,c)$ such that $N(a',b',c')=2$ and $\underline{0}<(a',b',c')<(a,b,c)$}.

Since $N(a,b,c)=2$, $c$ is a root of $\phi(x)=x^2-abx+a^2+b^2-4$, the other one being $c'$ which satisfies $c'=ab-c$ and $cc'=a^2+b^2-4$; what this means is that $c'$ is a positive integer, that $N(a,b,c')=2$ and that $(a,b,c)$ and $(a,b,c')$ are castling-equivalent.

If we factor $\phi(x)$ using $c$ and $c'$ and evaluate it in $b$, we obtain

\begin{center}
$\phi(b)=(b-c)(b-c')=b^2(2-a)+a^2-4$
\end{center}

The value on the right-hand side of the previous equality is strictly negative under our hypotheses. In fact $\phi(b)<0$ if and only if $2<a<b^2-2$; since $a<b$ we also have $a<b^2-2$, provided $b^2-2\geq b$ (which is true unless $b=0$ or $b=1$, with both these cases excluded by hypothesis since $a\in\mathbb{N}$ and $b>a$). Moreover, if we had $a=2$ then $b$ and $c$ would satisfy $b^2+c^2-2bc=0$ i.e. $(b-c)^2=0$ and we would obtain $b=c$ (contradiction); if we had $a=1$ instead, $b$ and $c$ would satisfy $b^2+c^2-bc-3=0$ i.e. $(b-c)^2+bc=3$ which is impossible because $c>b\geq 2$. We can conclude that $\phi(b)<0$.

Since $c>b$, $\phi(b)<0$ means $c'<b$ which implies $\underline{0}<(a,c',b)<(a,b,c)$ (if $c'\geq a$) or $\underline{0}<(c',a,b)<(a,b,c)$ (if $c'<a$); our claim is thus proved. This means that starting from a triplet $(a,b,c)\in\mathbb{N}^3$ such that $N(a,b,c)=2$ we can follow these steps:

\begin{enumerate}
  \item Check whether $a\neq b\neq c\neq a$.
	\item If the answer to 1. is `no', then at least two elements of $(a,b,c)$ are equal, and this means $(a,b,c)=(2,k,k)$ for a unique integer $k\geq 1$ or $(a,b,c)=(k,k,k^2-2)$ for a unique integer $k\geq 2$. In the first case we stop immediately; in the second one, we apply a castling transformation to obtain the triplet $(2,k,k)$ with $k\geq 2$ and then we stop.
	\item If the answer to 1. is `yes', as we have just shown we can find a triplet $(a',b',c')$ castling-equivalent to $(a,b,c)$ such that $\underline{0}<(a',b',c')<(a,b,c)$ and $N(a',b',c')=2$; in this case we start over from 1. using $(a',b',c')$.
\end{enumerate}

Again, the condition $\underline{0}<(a',b',c')<(a,b,c)$ grants termination of the previous procedure; since upon termination we are left with a triplet $(2,k,k)\in\mathbb{N}^3$, the proposition is proved.
\end{proof}

With this same `descent method' we can find all minimal triplets $(a,b,c)\in\mathbb{N}^3$ such that $N(a,b,c)=m$ for any value of $m\in\mathbb{Z}$: for example, $N(a,b,c)=3$ gives the minimal triplet $(1,2,2)$ while $N(a,b,c)=6$ gives the minimal triplets $(1,1,3)$ and $(2,2,4)$. We identified minimal triplets for several values $m\geq 3$ and noticed that either they contain a $1$ (and so they are trivially prehomogeneous) or they satisfy Kac's inequality (\ref{eq:DisKac}) (and so they are prehomogeneous by Proposition \ref{prop:pKac}). The following proposition shows that this is no coincidence, but first we need a lemma:

\begin{lem}\label{lem:Lemmetto}
Assume $N(a,b,c)\geq 3$, with $2\leq a\leq b\leq c$; then $ab\leq 2c$.
\end{lem}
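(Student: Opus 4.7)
The plan is to argue by contradiction: assume $ab > 2c$ and derive a contradiction with the hypothesis $N(a,b,c)\geq 3$. The key observation is that, with $a$ and $b$ fixed, the condition $N(a,b,c)=N$ is a quadratic equation in $c$. Concretely, $c$ is a root of
\begin{equation*}
f(x):=x^2-(ab)x+(a^2+b^2-2-N),
\end{equation*}
a parabola opening upward with vertex at $x=ab/2$. The assumption $ab>2c$ forces $c<ab/2$, so $c$ is the \emph{smaller} of the two real roots (in particular $\Delta\geq 0$).

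The next step uses the hypothesis $b\leq c$: since $c$ is the smaller root and $b\leq c$, the point $b$ lies no further right than the smaller root, and hence $f(b)\geq 0$ (values of $f$ to the left of both roots are non-negative). A direct computation gives
\begin{equation*}
f(b)=b^2(2-a)+a^2-2-N,
\end{equation*}
so the inequality $f(b)\geq 0$ becomes $b^2(2-a)+a^2-2-N\geq 0$. The remainder of the argument is a short case split on $a$.

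If $a=2$, then $f(b)=2-N$, so $f(b)\geq 0$ forces $N\leq 2$, contradicting $N\geq 3$. If $a\geq 3$, then $2-a\leq -1$ and $b\geq a$, so $b^2(2-a)\leq -b^2\leq -a^2$, which gives $f(b)\leq -2-N<0$, again a contradiction. Either way the assumption $ab>2c$ is untenable, proving $ab\leq 2c$.

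I do not expect any serious obstacle here: the whole argument reduces to analyzing the roots of a quadratic with integer coefficients together with the ordering $a\leq b\leq c$. The only step requiring a moment of care is recognizing that $f(b)\geq 0$ follows from $b$ being to the left of the smaller root; after that, evaluating $f(b)$ and splitting on whether $a=2$ or $a\geq 3$ finishes the proof mechanically.
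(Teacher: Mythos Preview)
Your argument is correct, and it is genuinely different from the paper's proof. The paper treats $c$ as a parameter and works geometrically in the $(a,b)$-plane: it considers the two conics $Q_1:\,a^2+b^2+c^2-abc-5=0$ and $Q_2:\,ab-2c=0$, rotates coordinates by $\pi/4$ to compute their intersection points explicitly, and then verifies that the relevant intersection points lie outside the square $0\leq a,b\leq c$. Your approach instead fixes $a,b$ and exploits the quadratic structure in $c$, which is exactly the device the paper uses in the descent arguments of Propositions~\ref{prop:N0C3} and~\ref{prop:N2C3}: evaluating $f$ (the paper's $\phi$) at $b$ and reading off the sign. The new twist here is that the contradictory assumption $ab>2c$ pins $c$ down as the \emph{smaller} root, so the ordering $b\leq c$ forces $f(b)\geq 0$ rather than $f(b)<0$; after that, the case split $a=2$ versus $a\geq 3$ is immediate. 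Your route is shorter, avoids the conic-intersection computation entirely, and stays within the same circle of ideas already present in the paper.
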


\begin{proof}
We can assume $c\geq 3$, since under our hypotheses $c=2$ would yield the triplet $(2,2,2)$ which gives $N(2,2,2)=2$; let us now consider these hyperbolas in the plane $(a,b)$:

\begin{center}
$Q_1:a^2+b^2+c^2-abc-5=0$
\end{center}
\begin{center}
$Q_2:ab-2c=0$
\end{center}

thinking of $c$ as a parameter (the figure shows the situation for $c=12$, with $Q_1$ drawn in black and $Q_2$ drawn in red).

\begin{center}
\includegraphics[scale=0.6]{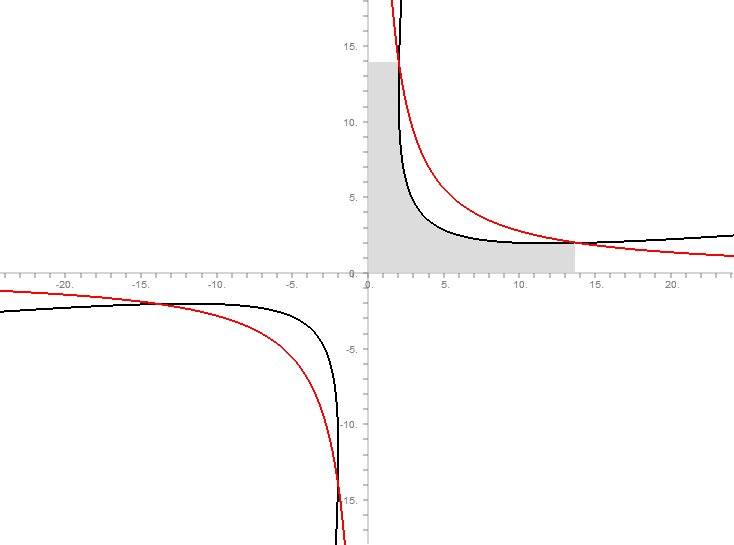}
\end{center}

The origin $(a,b)=(0,0)$ belongs to the area $P_1$ where $a^2+b^2+c^2-abc\geq 5$ if and only if $c\geq\sqrt{5}$, which is true since we have $c\geq 3$; on the other hand, $(0,0)$ definitely belongs to the area $P_2$ where $ab-2c\leq 0$. 

We have to prove that if $(a,b)$ such that $c\geq a,b\geq 0$ belongs to $P_1$ then it belongs to $P_2$, too, i.e. that $(a,b)$ is in the greyed out area of the figure; in order to do this, it is enough to show that the two intersection points of $Q_1$ and $Q_2$ located in the first quadrant have respectively ordinate and abscissa greater than or equal to $c$. \\

After applying the change of coordinates $a\mapsto \frac{1}{\sqrt{2}}a-\frac{1}{\sqrt{2}}b$, $b\mapsto \frac{1}{\sqrt{2}}a+\frac{1}{\sqrt{2}}b$ (a counterclockwise rotation of $\pi/4$) we can rewrite the hyperbolas as:

\begin{center}
$Q'_1:(1-\frac{c}{2})a^2+(1+\frac{c}{2})b^2+c^2-5=0$
\end{center}
\begin{center}
$Q'_2:a^2-b^2-4c=0$
\end{center}

The intersection points of $Q'_1$ and $Q'_2$ are $(\pm x,\pm y)$ where $x:=\sqrt{\frac{c^2+4c+5}{2}}$ and $y:=\sqrt{\frac{c^2-4c+5}{2}}$, so the intersection points of $Q_1$ and $Q_2$ in the first quadrant are (we apply the inverse change of coordinates) $\left(\frac{1}{\sqrt{2}}x-\frac{1}{\sqrt{2}}y,\frac{1}{\sqrt{2}}x+\frac{1}{\sqrt{2}}y\right)$ and $\left(\frac{1}{\sqrt{2}}x+\frac{1}{\sqrt{2}}y,\frac{1}{\sqrt{2}}x-\frac{1}{\sqrt{2}}y\right)$. We have to prove that the ordinate of the first point and the abscissa of the second one (which are the same) are greater than or equal to $c$.

$\frac{1}{2}\sqrt{c^2+4c+5}+\frac{1}{2}\sqrt{c^2-4c+5}\geq c\Leftrightarrow\sqrt{c^2+4c+5}+\sqrt{c^2-4c+5}\geq 2c\Leftrightarrow c^2+4c+5+c^2-4c+5+2\sqrt{(c^2+4c+5)(c^2-4c+5)}\geq 2c\Leftrightarrow\sqrt{(c^2+5)^2-16c^2}\geq 5-c^2$; the hypothesis $c\geq 3$ forces the last term to be strictly negative, therefore we simply need to prove that the argument of the square root is non-negative. This argument is $c^4-6c^2+25$ i.e. $z^2-6z+25$ after setting $z:=c^2$; since the discriminant of the latter is negative we can conclude that the argument is indeed non-negative.
\end{proof}

We can now show that

\begin{thm}
Let $(a,b,c)\in\mathbb{N}^3$; if $N(a,b,c)\geq 3$ then $(a,b,c)$ is prehomogeneous.
\end{thm}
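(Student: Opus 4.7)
The plan is to argue by induction on $c$ after reordering (without loss of generality) so that $a \leq b \leq c$. The base step is vacuous: the smallest admissible value of the maximum is $c = 2$, for which only $(2,2,2)$ exists, and it has $N = 2 < 3$.

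For the inductive step I first dispose of the easy subcases. If $a = 1$, then $c \geq b = 1 \cdot b$, and Corollary \ref{cor:CSuff} yields prehomogeneity immediately. If $a \geq 2$ and $c \geq ab$, the same corollary applies (omitting the largest entry). The remaining configuration is $2 \leq a \leq b \leq c < ab$, and this is where the real work takes place.

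Here the decisive input is Lemma \ref{lem:Lemmetto}, which gives $ab \leq 2c$. Setting $c' := ab - c$, we then have $1 \leq c' \leq c$, and a castling transformation on the last entry produces the triple $(a, b, c')$. By Proposition \ref{prop:N} this triple still satisfies $N \geq 3$, and by Proposition \ref{prop:castling} together with Remark \ref{rem:referee} it is prehomogeneous if and only if $(a,b,c)$ is. It then suffices to show that, after reordering, the maximum of $(a, b, c')$ is strictly smaller than $c$, so that the induction hypothesis closes the argument.

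Strict descent comes down to excluding two borderline configurations, both incompatible with the standing hypotheses $N \geq 3$ and $a \geq 2$: first, $b = c$ is impossible because $N(a,c,c) = a^2 + c^2(2-a) - 2$ equals $2$ when $a = 2$ and is strictly negative when $a \geq 3$ with $c \geq a$; second, the equality $ab = 2c$ would give $N = a^2 + b^2 - a^2 b^2/4 - 2$, which a short direct check shows is likewise at most $2$ throughout the admissible range. I expect this bookkeeping to be the main obstacle, not because any single case is difficult but because the descent must be genuinely strict in $c$, and that strictness is precisely what these small computations secure. The geometric heart of the argument, the inequality $ab \leq 2c$, is already in hand via Lemma \ref{lem:Lemmetto}.
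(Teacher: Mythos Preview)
Your argument is correct and is the paper's proof repackaged as an explicit descent: both use Lemma~\ref{lem:Lemmetto} together with the exclusion of $ab=2c$ to reach the situation $c\geq ab$, the paper by passing once to the minimal triple in the castling class and then appealing to Proposition~\ref{prop:pKac}, you by inducting on the largest entry and finishing with Corollary~\ref{cor:CSuff} (which already suffices, since the paper's endpoint $ab-c\leq 0$ is precisely the hypothesis of that corollary). Two small remarks: the base case is not actually vacuous, since $(1,2,2)$ has maximum $2$ and $N=3$, though your $a=1$ subcase absorbs it; and under the lemma, $b=c$ forces $a\leq 2$ and hence $ab=2c$, so your two borderline exclusions collapse to one.
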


\begin{proof}
Since $N(a,b,c)$ is castling-invariant, we can assume $(a,b,c)$ to be minimal; moreover, we can assume $a,b,c>1$ (if one of them is $1$ then $(a,b,c)$ is trivially prehomogeneous) and, without loss of generality, $2\leq a\leq b\leq c$. Under these hypotheses, the previous lemma tells us that $ab\leq 2c$, i.e that $ab-c\leq c$.

If $ab-c=c$ we would have $(a,b,c)=(a,b,\frac{ab}{2})$, but $N(a,b,\frac{ab}{2})=2$: in fact $N(a,b,\frac{ab}{2})=a^2+b^2-\frac{a^2b^2}{4}-2$, which is smaller than or equal to $2$ if and only if $b^2(1-\frac{a^2}{4})+a^2-4\leq 0$, and this is certainly true for $a=2$ (in this case we obtain the triplets $(2,b,b)$); if $a\geq 3$ then $(1-\frac{a^2}{4})<0$ and our claim is true as long as $b^2\geq\frac{4-a^2}{1-\frac{a^2}{4}}=4$, which is true by hypothesis.

We can conclude that $ab-c<c$, but since $(a,b,c)$ is minimal, this forces $ab-c\leq 0$; this means $-ab\geq -c$ which implies $c^2+b^2-abc=c^2+b^2+c(-ab)\geq c^2+b^2-c^2=b^2\geq 1$ since $b\geq 2$ by hypothesis. Thus $(a,b,c)$ satisfies Kac's inequality (\ref{eq:DisKac}) and by Proposition \ref{prop:pKac} it is prehomogeneous.
\end{proof}

The next theorem, whose proof is contained in the previous propositions, sums up what we obtained for the case $n$=3.

\begin{thm}\label{thm:FINALE}
Let $(a,b,c)\in\mathbb{N}^3$:
\begin{enumerate}
  \item If $N(a,b,c)\leq -1$ then $(a,b,c)$ is not prehomogeneous.
	\item If $N(a,b,c)=0$ then $(a,b,c)$ is prehomogeneous.
	\item The case $N(a,b,c)=1$ cannot happen (see Proposition \ref{prop:NmC3}).
  \item If $N(a,b,c)=2$ then $(a,b,c)$ is castling-equivalent to $(2,k,k)$ for a unique $k\in\mathbb{N}$, and it is prehomogeneous if and only if $k\leq 3$.
	\item If $N(a,b,c)\geq 3$ then $(a,b,c)$ is prehomogeneous.
\end{enumerate}
\end{thm}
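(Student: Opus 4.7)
The plan is short because this is a summary theorem: essentially every case has already been settled in the material just above, so the proof amounts to dispatching each of the five alternatives by citing the appropriate earlier result, together with a one-line observation to translate them into the required form.

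First I would handle case (1). The argument is the dimension inequality (\ref{eq:CondBase}): for the natural action of $G=GL_{a}(\CC)\times GL_{b}(\CC)\times GL_{c}(\CC)$ every isotropy group contains the $2$-dimensional subgroup $H=\{(\lambda_1 I_a,\lambda_2 I_b,\lambda_3 I_c)\mid \lambda_1\lambda_2\lambda_3=1\}$, so a necessary condition for prehomogeneity is exactly $N(a,b,c)=a^2+b^2+c^2-abc-2\geq 0$, as already recorded in (\ref{eq:CondNec}). Hence $N(a,b,c)\leq -1$ forces $(a,b,c)$ to be non-prehomogeneous.

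For case (2) I would invoke Proposition \ref{prop:N0C3}, which pins down the only triple in $\mathbb{N}^3$ with $N=0$ as $(1,1,1)$; the space $\CC\otimes\CC\otimes\CC\cong\CC$ under $GL_1(\CC)^3$ is trivially prehomogeneous. For case (3) I would cite Proposition \ref{prop:NmC3} with $k=0$, which rules out $N=1$. For case (4) I would invoke Proposition \ref{prop:N2C3} to reduce to $(2,k,k)$ for a unique $k\in\mathbb{N}$; since castling-equivalent spaces are prehomogeneous together, the Weierstrass result Proposition \ref{prop:Weierstrass} then gives prehomogeneity precisely for $k\leq 3$. For case (5) I would cite the theorem immediately preceding: once $N(a,b,c)\geq 3$, one may assume $(a,b,c)$ minimal with $2\leq a\leq b\leq c$, apply Lemma \ref{lem:Lemmetto} to get $ab\leq 2c$, rule out $ab=2c$ (else $N=2$), use minimality to force $ab\leq c$, deduce $c^2+b^2-abc\geq b^2\geq 1$, and conclude via Proposition \ref{prop:pKac} (Kac's theorem).

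There is no real obstacle here: all the substance lies in Propositions \ref{prop:N0C3}, \ref{prop:NmC3}, \ref{prop:N2C3}, \ref{prop:Weierstrass}, and the Kac-based theorem just above, so the only thing to be careful about is that in case (4) prehomogeneity is transported along castling-equivalence (which is guaranteed by Proposition \ref{prop:castling} together with Remark \ref{rem:referee}), and that the uniqueness of $k$ in case (4) really is part of Proposition \ref{prop:N2C3}. With those remarks the five bullets above assemble the statement.
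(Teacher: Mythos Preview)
Your proposal is correct and matches the paper's approach exactly: the paper states that the proof of Theorem~\ref{thm:FINALE} ``is contained in the previous propositions,'' and your dispatch of the five cases to (\ref{eq:CondNec}), Proposition~\ref{prop:N0C3}, Proposition~\ref{prop:NmC3}, Proposition~\ref{prop:N2C3} with Proposition~\ref{prop:Weierstrass}, and the Kac-based theorem (via Lemma~\ref{lem:Lemmetto} and Proposition~\ref{prop:pKac}) is precisely how those pieces assemble. One cosmetic point: in case~(5) your parenthetical ``else $N=2$'' should read ``else $N\leq 2$'' (the paper shows $N(a,b,ab/2)\leq 2$, with equality only when $a=2$), but this does not affect the argument since either way it contradicts $N\geq 3$.
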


\section{The case $n\geq4$}

We will use the same convention of Section 2, considering $n$-tuples $(a_1,\ldots,a_n)\in\mathbb{N}^n$ and reserving the term 'minimal element (space)' for elements (spaces) of minimal positive dimension in a castling-equivalence class.\\

The necessary condition that an $n$-tuple $(a_1,\ldots,a_n)\in\mathbb{N}^n$ has to satisfy to be prehomogeneous is (recall again (\ref{eq:CondNec}) and (\ref{eq:DefN}))

\begin{equation}\label{eq:CN}
N(a_1,\ldots,a_n)\geq 0
\end{equation}

We also have two sufficient conditions, which we will state (without loss of generality) under the assumption that $a_1\leq\ldots\leq a_n$. The stronger one is given by Proposition \ref{prop:pKac} (recall the argument at the end of Section 1): assuming $a_n\geq a_{n-1}\geq\prod_{i=1}^{n-2} a_i\geq 2$, it is

\begin{equation}\label{eq:CSKac}
a_n^2+a_{n-1}^2-\prod_{i=1}^n a_i\geq 1
\end{equation}

The weaker one is stated in Corollary \ref{cor:CSuff}, and it is

\begin{equation}\label{eq:CSMan}
a_n\geq\prod_{i=1}^{n-1} a_i
\end{equation}

Quite surprisingly, in most cases an $n$-tuple satisfying the necessary condition (\ref{eq:CN}) also satisfies (\ref{eq:CSMan}), as the following lemma shows:

\begin{lem}\label{lem:CasoN}
Let $(a_1,\ldots,a_n)$ be a minimal $n$-tuple such that $a_i\geq 2$ for every $i=1,\dots,n$ and $N(a_1,\dots,a_n)\geq 0$; if $n\geq 4$, there exists $j\in\{1,\ldots,n\}$ such that $a_j\geq\prod_{i\neq j} a_i$. This means in particular that $(a_1,\ldots,a_n)$ is prehomogeneous. 
\end{lem}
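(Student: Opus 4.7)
The plan is to argue by contradiction after reducing to the case where every castling move is applicable. Sort so that $a_1 \leq \ldots \leq a_n$; the desired conclusion is $a_n \geq \prod_{i<n} a_i$, so suppose instead $\prod_{i<n} a_i > a_n$. Castling at the $n$-th slot is then valid (gives a positive new entry), and minimality forces the new last entry $\prod_{i<n} a_i - a_n$ to be at least $a_n$, i.e.\ $\prod_{i<n} a_i \geq 2 a_n$. For any $j < n$ the analogous inequality comes for free from $n \geq 4$ and $a_i \geq 2$:
\[
\prod_{i \neq j} a_i = a_n \prod_{i<n,\, i \neq j} a_i \geq 2^{n-2} a_n \geq 4 a_j \geq 2 a_j.
\]
So in this bad situation $\prod_{i \neq j} a_i \geq 2 a_j$ holds for every $j$, equivalently $\prod_i a_i \geq 2 a_j^2$ for every $j$.

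The key step is to pass to logarithmic coordinates $\ell_i := \log a_i$ and consider the region $R \subset \mathbb{R}^n$ cut out by $\ell_i \geq \log 2$ for all $i$ and $\sum_k \ell_k - 2 \ell_j \geq \log 2$ for all $j$: this is a convex polyhedron whose componentwise-smallest point is $\ell^* := (\log 2, \ldots, \log 2)$ (the check $\ell^* \in R$ only needs $n \geq 3$). Set
\[
F(\ell) := \sum_i e^{2 \ell_i} - e^{\sum_i \ell_i} = \sum_i a_i^2 - \prod_i a_i,
\]
so that $N(a_1, \ldots, a_n) = F(\ell) - (n - 1)$. A direct computation gives $\partial F / \partial \ell_j = 2 e^{2 \ell_j} - e^{\sum_k \ell_k}$, which by the defining inequality of $R$ is $\leq 0$ throughout $R$. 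For any $\ell \in R$ the segment from $\ell^*$ to $\ell$ stays in $R$ (convexity), points in the componentwise-non-negative direction $\ell - \ell^*$, and $F$ is therefore non-increasing along it, giving
\[
F(\ell) \leq F(\ell^*) = 4n - 2^n.
\]

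Finally, an easy induction (base $n = 4$: $13 < 16$; step $2^{n+1} > 2(3n + 1) \geq 3(n+1) + 1$) shows $3n + 1 < 2^n$ for every $n \geq 4$, i.e.\ $4n - 2^n < n - 1$. Combining with the previous bound yields $N(a_1, \ldots, a_n) = F(\ell) - (n-1) < 0$, contradicting $N \geq 0$. Therefore $a_n \geq \prod_{i<n} a_i$, and prehomogeneity then follows at once from Corollary~\ref{cor:CSuff}. I expect the main conceptual obstacle to be spotting that taking logarithms converts the multiplicative constraints $\prod_{i \neq j} a_i \geq 2 a_j$ into a convex polyhedron with the clean componentwise-minimum $\ell^* = (\log 2, \ldots, \log 2)$; in the original $a$-coordinates the feasible region is not obviously convex, and the assertion that $F$ attains its maximum at $(2, \ldots, 2)$ is considerably less transparent.
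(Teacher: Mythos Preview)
Your argument is correct and takes a genuinely different route from the paper's proof. Both proofs open the same way: assume $a_n<q:=\prod_{i<n}a_i$ and use minimality of the castling at slot $n$ to deduce $q\geq 2a_n$. From there the paper stays in the original variables and runs two successive quadratic estimates: first it uses $\sum_i a_i^2\leq na_n^2$ to force $a_n>q/n$, then it bounds $\sum_{i<n}a_i^2\leq\frac{n-1}{4^{n-2}}q^2$ and locates the roots of the resulting quadratic in $a_n$ to show the whole interval $(q/n,\,q/2]$ lies where $N<0$. Your approach instead upgrades the single inequality $q\geq 2a_n$ to the symmetric family $\prod_{i\neq j}a_i\geq 2a_j$ for all $j$, passes to logarithms so that these constraints cut out a convex polyhedron $R$ with componentwise minimum at $(\log 2,\ldots,\log 2)$, and observes that the gradient of $F(\ell)=\sum e^{2\ell_i}-e^{\sum\ell_i}$ is componentwise non-positive on $R$; the maximum of $F$ on $R$ is therefore attained at the all-$2$'s point, where $F=4n-2^n<n-1$ for $n\geq 4$. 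The payoff of your version is conceptual clarity: it makes transparent that the extremal tuple under the contradiction hypotheses is $(2,\ldots,2)$, and replaces the root computation by a one-line monotonicity argument. The paper's version, by contrast, never leaves elementary algebra and avoids any appeal to calculus or convexity, which some readers may prefer.
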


\begin{proof}
The last assertion is just an immediate consequence of Corollary \ref{cor:CSuff}, so we only need to prove the first one. Without loss of generality we can assume our $n$-tuple to be ordered, i.e. that $2\leq a_1\leq\ldots\leq a_n$; this means we have to prove that $a_n\geq\prod_{i=1}^{n-1} a_i$. Let us call $q:=\prod_{i=1}^{n-1} a_i$ and see that it cannot happen that $a_n<q$:

\begin{enumerate}

  \item If $\frac{q}{2}<a_n<q$ then $0<q-a_n<q-\frac{q}{2}=\frac{q}{2}<a_n$ which would mean $0<(q-a_n)\prod_{i=1}^{n-1}a_i<\prod_{i=1}^n a_i$; but in this case $(a_1,\ldots,a_n)$ would not be minimal, which is a contradiction. We have to conclude that $a_n\leq\frac{q}{2}$.
	
	\item The hypothesis $2\leq a_1\leq\ldots\leq a_n$ allows us to write $N(a_1,\ldots,a_n)\leq na_n^2-qa_n-n+1=a_n(na_n-q)-n+1$, and since this number is negative for $a_n\leq\frac{q}{n}$ it must be $\frac{q}{n}<a_n\leq\frac{q}{2}$.
	
	\item Since $a_i\geq 2$ for every $i=1,\ldots,n$ we have $q^2\geq 4^{n-2}a_{n-1}^2=\frac{4^{n-2}}{n-1}(n-1)a_{n-1}^2\geq\frac{4^{n-2}}{n-1}(a_1^2+\dots+a_{n-1}^2)$ ($a_{n-1}$ being greater than or equal to $a_i$ for every $i=1,\dots,n-2$); we can thus write
	
	\begin{equation*}
	N(a_1,\dots,a_n)\leq\frac{n-1}{4^{n-2}}q^2+a_n^2-qa_n-n+1\leq\frac{n-1}{4^{n-2}}q^2+a_n^2-qa_n
	\end{equation*}
	
  If we view the right-hand side as a polynomial in the variable $a_n$, we can see that its discriminant is $\frac{4^{n-3}-n+1}{4^{n-3}}q^2$, which is non-negative for $n\geq 4$; this means that under our hypotheses the polynomial has two (distinct) roots
	
	\begin{equation*}
	r=\frac{1}{2}\left[q\pm\sqrt{q^2-\frac{n-1}{4^{n-3}}q^2}\right]
	\end{equation*}
	
	i.e.
	
	\begin{equation*}
	\begin{split}
	r_{+} &= \frac{q}{2}\left[1+\sqrt{\frac{4^{n-3}-n+1}{4^{n-3}}}\right]\\
	r_{-} &= \frac{q}{2}\left[1-\sqrt{\frac{4^{n-3}-n+1}{4^{n-3}}}\right]
	\end{split}
	\end{equation*}
	
	We have $r_{+}>\frac{q}{2}$, and most importantly $r_{-}\leq\frac{q}{n}$ too: in fact after some computations we can see that for this last relation to hold it is enough that $4^{n-2}\geq n^2$, which is again true for $n\geq 4$. Thus we obtain $(\frac{q}{n},\frac{q}{2}]\subseteq[r_{-},r_{+}]$, and since for $a_n$ in the latter interval we get $N(a_1,\dots,a_n)\leq 0$ we deduce that for $\frac{q}{n}<a_n\leq\frac{q}{2}$ we have $N(a_1,\dots,a_n)<0$.
\end{enumerate}

Since the assumption $a_n<q$ led us to a contradiction, our lemma is proved.
\end{proof}

This lemma solves the problem of prehomogeneity for all $n$-tuples whose smallest element is at least two, but what if some of the entries are ones? Luckily, as one can easily verify, if $N(a_1,\dots,a_n)=m$ then $N(1,\dots,1,a_1,\dots,a_n)=m$ too: this allows us to tackle the prehomogeneity problem for $n$-tuples by induction.

\begin{thm}\label{thm:FINALEn}
Assume $n\geq 4$ and let $(a_1,\dots,a_n)\in\mathbb{N}^n$:
\begin{enumerate}
  \item If $N(a_1,\ldots,a_n)\leq -1$ then $(a_1,\dots,a_n)$ is not prehomogeneous.
	\item If $N(a_1,\ldots,a_n)=0$ or $N(a_1,\ldots,a_n)=1$ then $(a_1,\dots,a_n)$ is prehomogeneous.
  \item If $N(a_1,\ldots,a_n)=2$ then $(a_1,\ldots,a_n)$ is castling-equivalent to either a minimal $n$-tuple $(a_1',\ldots,a_n')$ whose smallest element is at least $2$, in which case $(a_1,\ldots,a_n)$ is prehomogeneous, or to a minimal $n$-tuple of type $(1,\ldots,1,2,k,k)$ for a unique $k\in\mathbb{N}$, in which case $(a_1,\ldots,a_n)$ is prehomogeneous if and only if $k\leq 3$.
	\item If $N(a_1,\ldots,a_n)\geq 3$ then $(a_1,\ldots,a_n)$ is prehomogeneous.
\end{enumerate}
\end{thm}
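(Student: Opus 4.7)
The plan is to reduce to the minimal element in the castling class and then apply either Lemma \ref{lem:CasoN} or Theorem \ref{thm:FINALE}. Part (1) follows immediately from the necessary condition (\ref{eq:CN}). For parts (2)--(4), let $(a_1',\ldots,a_n')$ (arranged in non-decreasing order) denote the unique minimal positive-dimension element in the castling class of $(a_1,\ldots,a_n)$; by Proposition \ref{prop:N}, $N(a_1',\ldots,a_n')=N(a_1,\ldots,a_n)$. If $a_1'\geq 2$, Lemma \ref{lem:CasoN} applies directly (as $n\geq 4$ and $N\geq 0$) and produces some index $j$ with $a_j'\geq\prod_{i\neq j}a_i'$, yielding prehomogeneity via Corollary \ref{cor:CSuff}; this settles case (3)(a) and covers (2) and (4) whenever the minimal tuple has all entries at least $2$.

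Otherwise $a_1'=1$, and I write $(a_1',\ldots,a_n')=(1^{n-m},b_1,\ldots,b_m)$ with all $b_i\geq 2$ and $m<n$. Three observations reduce the problem to the $m$-tuple $(b_1,\ldots,b_m)$: prehomogeneity transfers (the $GL_1$ factors act by scalars absorbed by the remaining factors); $N(b_1,\ldots,b_m)=N(a_1,\ldots,a_n)$ directly from the definition; and $(b_1,\ldots,b_m)$ is itself minimal in its $m$-castling class, since the $1$'s contribute trivially to the products that govern castling moves, so any castling of the reduced tuple lifts to a castling of the original with the same effect on the product. If $m\geq 4$, Lemma \ref{lem:CasoN} applied to $(b_1,\ldots,b_m)$ produces some $j$ with $\prod_i b_i\leq b_j^2$, whence
\begin{equation*}
N(b_1,\ldots,b_m)\geq \sum_{i\neq j}b_i^2-m+1\geq 4(m-1)-m+1=3m-3\geq 9.
\end{equation*}
So for $0\leq N\leq 8$ the subcase $m\geq 4$ cannot occur, and for $N\geq 9$ the lemma already yields prehomogeneity.

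For $m\leq 3$ I would invoke Theorem \ref{thm:FINALE} together with direct checks for $m=1,2$. When $N=0$ only $m=0$ is consistent, giving $(1,\ldots,1)$ (trivially prehomogeneous). When $N=1$ no value of $m$ is consistent in this case (Proposition \ref{prop:NmC3} rules out $m=3$, and short checks rule out $m=1,2$), so the claim in (2) follows together with the $a_1'\geq 2$ subcase. When $N=2$ the subcase $m=3$ yields $(b_1,b_2,b_3)=(2,k,k)$ with $k\geq 2$ by Theorem \ref{thm:FINALE}, and $m=1$ yields $b_1=2$ (the case $k=1$), while $m=2$ has no solutions; thus the minimal $n$-tuple is $(1^{n-3},2,k,k)$ for a unique $k\geq 1$, and prehomogeneity iff $k\leq 3$ follows from the Corollary after Proposition \ref{prop:Weierstrass}. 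When $N\geq 3$ the reduced $m$-tuple is prehomogeneous either by Theorem \ref{thm:FINALE} (for $m=3$) or trivially (for $m\leq 2$). The main obstacle I anticipate is the minimality-preservation observation, which requires carefully comparing castling moves at corresponding indices on the $n$- and $m$-tuples; once that is in hand, the bound $3m-3\geq 9$ coming from Lemma \ref{lem:CasoN} is what makes the remaining case analysis finite and mechanical.
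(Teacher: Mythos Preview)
Your argument is correct and follows essentially the same strategy as the paper: reduce to the minimal representative, apply Lemma~\ref{lem:CasoN} when all entries are at least $2$, and otherwise strip the $1$'s and invoke the $n=3$ results (Propositions~\ref{prop:N0C3}, \ref{prop:NmC3}, \ref{prop:N2C3} and Theorem~\ref{thm:FINALE}). The only organizational difference is that the paper peels off one $1$ at a time by induction on $n$, whereas you remove them all at once and then use the bound $N\geq 3m-3$ (a pleasant extra observation, since it shows that for $n\geq 4$ the alternative in part~(3) with all entries $\geq 2$ is in fact vacuous); the minimality-preservation step you flag as a potential obstacle is needed in both approaches and is exactly the easy lifting argument you sketch.
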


\begin{proof}
Throughout this proof we assume that $(a_1,\ldots,a_n)$ is minimal and that (without loss of generality) $a_1\leq\ldots\leq a_n$.

\begin{enumerate}
 \item Obvious, since such $n$-tuples do not satisfy (\ref{eq:CN}).
 \item Case $N(a_1,\ldots,a_n)=0$ - Let us start by $n=4$. If $a_1\geq 2$ the statement follows from Lemma \ref{lem:CasoN}; if $a_1=1$ then $N(a_2,a_3,a_n)=0$ too, which means, by Proposition \ref{prop:N0C3}, that $(a_2,a_3,a_4)=(1,1,1)$ and consequently $(a_1,\dots,a_4)=(1,1,1,1)$, which is prehomogeneous.

Let us now assume $n>4$. If $a_1\geq 2$ we can conclude using again Lemma \ref{lem:CasoN}; if $a_1=1$ then $N(a_2,\ldots,a_n)=0$ and by induction this $(n-1)$-tuple is prehomogeneous, which means $(1,a_2,\ldots,a_n)$ is prehomogeneous too.\\

Case $N(a_1,\dots,a_n)=1$ - Let us start by $n=4$. If $a_1=1$ then $N(a_2,a_3,a_n)=1$, but this cannot happen (recall Proposition \ref{prop:NmC3}); then it must be $a_1\geq 2$, and the statement follows from Lemma \ref{lem:CasoN}.

Let us now assume $n>4$. If $a_1\geq 2$ we can conclude using again Lemma \ref{lem:CasoN}; if $a_1=1$ then $N(a_2,\ldots,a_n)=1$ and by induction this $(n-1)$-tuple is prehomogeneous, which means $(1,a_2,\ldots,a_n)$ is prehomogeneous too.

 \item Let us start by $n=4$. If $a_1\geq 2$ we conclude using Lemma \ref{lem:CasoN}; if $a_1=1$ then $N(a_2,a_3,a_4)=2$, which means, by Proposition \ref{prop:N2C3}, $(a_2,a_3,a_4)=(2,k,k)$ for a unique $k\in\mathbb{N}$. Since such a triplet is prehomogeneous if and only if $k\leq 3$ (see Proposition \ref{prop:Weierstrass}), the same goes for $(1,2,k,k)$.

Let us now assume $n>4$. If $a_1\geq 2$ we use again Lemma \ref{lem:CasoN}; if $a_1=1$ then $N(a_2,\ldots,a_n)=2$, and by induction we can say that either $(a_2,\ldots,a_n)$ is prehomogeneous (in which case the same goes for $(1,a_2,\ldots,a_n)$) or $(a_2,\ldots,a_n)=(1,\ldots,1,2,k,k)$ with $k\geq 4$. In this last case $(1,a_2,\ldots,a_n)=(1,\ldots,1,2,k,k)$ with $k\geq 4$ and as such it is not prehomogeneous.

 \item Let us start by $n=4$. If $a_1\geq 2$ we conclude with Lemma \ref{lem:CasoN}; if $a_1=1$ then $N(a_2,a_3,a_4)=m\geq 3$ which means, by Theorem \ref{thm:FINALE}, that $(a_2,a_3,a_4)$ is prehomogeneous (and consequently the same goes for $(1,a_2,a_3,a_4)$).

Let us now assume $n>4$. If $a_1\geq 2$ we use Lemma \ref{lem:CasoN}; if $a_1=1$ then $N(a_2,\ldots,a_n)=m\geq 3$ and by induction we can say that $(a_2,\ldots,a_n)$ is prehomogeneous. In this case $(1,a_2,\ldots,a_n)$ is prehomogeneous too.
\end{enumerate}
\end{proof}

\begin{flushleft}
Dipartimento di Matematica "Tullio Levi-Civita", Universit\'a degli Studi di Padova, 35121 Padova, Italy

\textit{E-mail address: venturel@math.unipd.it}
\end{flushleft}


\begin{thebibliography}{}         
\addcontentsline{toc}{section}{References}
\bibitem[BGP]{BGP} Bernstein I. N., Gel'fand I. M., Ponomarev V. A. Coxeter functors and Gabriel's theorem. Russ. Math. Surv., 1973; \textbf{28}: 17-32.
\bibitem[Kac]{Kac} Kac V. G. Infinite roots system, representations of graphs and invariant theory. Invent. Math., 1980; \textbf{56(1)}: 57-92.
\bibitem[Kim]{Kim} Kimura T. Introduction to prehomogeneous vector spaces, Translations of Mathematical Monographs, Providence, R.I.: American Mathematical Society, 2003; \textbf{215}. 
\bibitem[KR]{KR} Kraft H., Riedtmann Ch. Geometry of representations of quivers. \textit{Representations of Algebras}. London Mathematical Society Lecture Note Series. Cambridge University Press, 1986; \textbf{116}: 109-145.
\bibitem[Man]{Man} Manivel L. Prehomogeneous spaces and projective geometry. Rend. Sem. Mat. Univ. Politec. Torino, 2013; \textbf{71(1)}: 35-118.
\bibitem[Par1]{Par1} Parfenov P. G. Tensor products with a finite number of orbits. Russ. Math. Surv., 1998; \textbf{53(3)}: 635-636.
\bibitem[Par2]{Par2} Parfenov P. G. Orbits and their closures in the spaces $\mathbb{C}^{k_1}\otimes\dots\otimes\mathbb{C}^{k_r}$. Sb. Math. 2001; \textbf{192(1-2)}:89-112.
\bibitem[SK]{SK} Sato M., Kimura T. A classification of irreducible prehomogeneous vector spaces and their relative invariants. Nagoya Math J. 1977; \textbf{65}: 1-155.
\bibitem[Wey]{Wey} Weyman J. Orbit classification for the representations associated to graded Lie algebras. Available at \url{http://folk.uio.no/ranestad/2010summerschool.pdf}
\end{thebibliography}
\end{document}